\newtheorem{theo}{Theorem}[section]
\newtheorem{lem}[theo]{Lemma}
\newtheorem{prop}[theo]{Proposition}
\newtheorem{cor}[theo]{Corollary}
\theoremstyle{definition}
\theoremstyle{remark}
\newtheorem{rem}[theo]{Remark}
\newtheorem{ques}{Question}[section]
\newtheorem{exam}{Example}[section]
\newtheorem{conj}{Conjecture}[section]
\numberwithin{equation}{section}
\newcommand{\bR}{{\mathbb R}}
\newcommand{\bZ}{{\mathbb Z}}
\def\gep{\varepsilon}
\def\gl{\lambda}
\def\wdt{\widetilde}
\def\wdtM{\widetilde{M}}
\def\upBdim{\overline{\dim}_B}
\def\lowBdim{\underline{\dim}_B}
\def\Bdim{\dim_B}
\def\N{\mathcal{N}}
\begin{document}

\title[Box dimension of generalized affine FIFs]{Box dimension of generalized affine \\ fractal interpolation functions}

\author{Lai Jiang}
\address{School of Mathematical Sciences, Zhejiang University, Hangzhou 310027, China}
\email{jianglai@zju.edu.cn}

\author{Huo-Jun Ruan}
\address{School of Mathematical Sciences, Zhejiang University, Hangzhou 310027, China}
\email{ruanhj@zju.edu.cn}
\thanks{The research was supported in part by NSFC grant 11771391, ZJNSF grant LY22A010023 and the Fundamental Research Funds
for the Central Universities of China grant 2021FZZX001-01.}

\thanks{Corresponding author: Huo-Jun Ruan}

\subjclass[2010]{Primary 28A80; Secondary 41A30.}

\date{}

\keywords{ Fractal interpolation functions, box dimension, iterated function systems, vertical scaling functions, spectral radius}

\begin{abstract}

  Let $f$ be a generalized affine fractal interpolation function with vertical scaling function $S$. In this paper, we study $\Bdim \Gamma f$, the box dimension of the graph of $f$, under the assumption that $S$ is a Lipschtz function. By introducing vertical scaling matrices, we estimate the upper bound and the lower bound of oscillations of $f$. As a result, we obtain explicit formula of $\Bdim \Gamma f$ under certain constraint conditions.
\end{abstract}

\maketitle

\section{Introduction}
\label{intro}

Let $N\geq 2$ be a positive integer. Given a data set $\{(x_i,y_i)\}_{i=0}^N\subset \bR^2$ with $x_0<x_1<\ldots<x_N$, there are many classical methods to construct functions interpolating the data set, while interpolation functions are smooth or piecewise smooth. In 1986, Barnsley \cite{Bar86} introduced fractal functions to interpolate the data set.

Let $L_i:\, [x_0,x_N]\to [x_{i-1},x_i], 1\leq i\leq N$ be contractive homeomorphisms with
\begin{equation}\label{eq:1-1}
  L_i(x_0)=x_{i-1}, \quad L_i(x_N)=x_i.
\end{equation}
Let $F_i:\, [x_0,x_N]\times \bR \to \bR, 1\leq i\leq N$ be continuous maps satisfying
\begin{equation}\label{eq:1-2}
  F_i(x_0,y_0)=y_{i-1}, \quad F_i(x_N,y_N)=y_i,
\end{equation}
and $F_i$ is uniformly contractive with the second variable, i.e., there exists a constant $\beta_i\in (0,1)$, such that for all $x\in [x_0,x_N]$, and all $y',y''\in \bR$,
\begin{equation}\label{eq:1-3}
  |F_i(x,y')-F_i(x,y'')| \leq \beta_i|y'-y''|.
\end{equation}
Then we can define maps $W_i:\, [x_0,x_N]\times \bR \to [x_{i-1},x_i]\times \bR$, $1\leq i\leq N$ by
\begin{equation}\label{eq:1-4}
  W_i(x,y)=(L_i(x),F_i(x,y)).
\end{equation}
From above conditions, it is easy to check that $W_i(x_0,y_0)=(x_{i-1},y_{i-1})$ and $W_i(x_N,y_N)=(x_i,y_i)$ for each $i$.

Notice that for each $1\leq i\leq N$, $W_i$ is continuous and it maps $[x_0,x_N]\times \bR$ into itself. Hence $\{W_i:\, 1\leq i\leq N\}$ is an \emph{iterated function system} (IFS for short) on $[x_0,x_N]\times \bR$.
Barnsley \cite{Bar86} proved that
there exists a unique continuous function $f$ on $[x_0,x_N]$ such that its graph $\Gamma f:=\{(x,f(x)):\, x\in [x_0,x_N]\}$ is the invariant set of the IFS $\{W_i:\, 1\leq i\leq  N\}$, i.e.,
\begin{equation}\label{eq:1-5}
  \Gamma f=\bigcup_{i=1}^N W_i(\Gamma f).
\end{equation}
Furthermore, the function $f$ always interpolates the data set, i.e., $f(x_i)=y_i$ for all $1\leq i\leq N$.
The function $f$ is called the \emph{fractal interpolation function} (FIF for short) determined by the IFS $\{W_i\}_{i=1}^N$.

Notice that box dimension is one of the most important dimensions in fractal geometry and its applications. Thus it is quite natural to study $\Bdim \Gamma f$, where $f$ is an FIF.

%


In the case that every $W_i$ is an affine maps, we call $f$ an \emph{affine FIF}. In this case, for each $i$, there exist real numbers $a_i,b_i,c_i,d_i$ and $e_i$, such that
\[
  W_i(x,y)=(a_i x+ b_i, c_i x+d_i y+e_i).
\]
$d_i$'s are called \emph{vertical scaling factors} of $f$. According to \eqref{eq:1-3}, $|d_i|<1$ for each $i$.
In \cite{BEHM89}, Barnsley, Elton, Hardin and Massopust obtained the box dimension formula of affine FIFs. They proved that if $\sum_{i=1}^N |d_i|>1$ and the interpolation points $\{(x_i,y_i)\}_{i=0}^N$ are not collinear, then the box dimension of $\Gamma f$ equals the unique real number $s$ satisfying the following equation:
\[
  \sum_{i=1}^N a_i^{s-1} |d_i|=1;
\]
and $\Bdim \Gamma f=1$ otherwise. This formula can be generalized to recurrent affine FIFs, see \cite{BEH89,RXY21} for example.


It is easy to check that if $f$ is an affine FIF, then $F_i$ can be rewritten as
\[
  F_i(x,y)=d_i (y-b(x)) + h(L_i(x)),
\]
where
\begin{itemize}
\item[(A1)] $b$ is a linear function satisfying $b(x_0)=y_0$ and $b(x_N)=y_N$,
\item[(A2)] $h$ is a piecewise linear function satisfying $h(x_i)=y_i$, $0\leq i\leq N$, and $h|_{[x_{i-1},x_i]}$ is linear for each $1\leq i\leq N$,
\item[(A3)] $L_i$, $1\leq i\leq N$, are linear functions on $[x_0,x_N]$ satisfying \eqref{eq:1-1}.
\end{itemize}

Now let $S(x)$ be a continuous function on $[x_0,x_N]$ with $|S(x)|<1$ for all $x\in [x_0,x_N]$. For each $1 \leq i\leq N$, we define
\begin{equation}\label{eq:1-6}
  F_i(x,y)=S(L_i(x)) (y-b(x)) +h(L_i(x)),\quad i=1,2,\ldots,N,
\end{equation}
where conditions (A1)-(A3) are satisfied. Then it is easy to see that $F_i$ satisfies \eqref{eq:1-2} and \eqref{eq:1-3}. Thus, if we define $W_i$ by \eqref{eq:1-4}, then $\{W_i\}_{i=1}^N$ determines an FIF $f$. In this case, we call $f$ a \emph{generalized affine FIF}, and call $S$ the \emph{vertical scaling function} of $f$.



In general, it is very challenging to obtain the box dimension of generalized affine FIFs without any restrictions.
Till now, as we know, there are few results in this direction. In \cite{BaMa15}, Barnsley and Massopust studied a special case of generalized affine FIFs. They assumed that the vertical scaling function $S$ is linear on $[x_{i-1},x_i]$ for all $1\leq i\leq N$. In this case, the corresponding FIF $f$ is called a \emph{bilinear FIF}. With some additional conditions, they obtained the box dimension formula of bilinear FIFs in the case of equally spaced data points. We remark that essentially, the proof in \cite{BaMa15} need the following condition: $S$ is nonnegative and it has uniform sum, that is, $\gamma(x)=\sum_{i=1}^N S(L_i(x))$ is constant on $[x_0,x_N]$. See 
Remark~\ref{rem:bilinear} and Remark~\ref{rem:4-13} for more details. This work was generalized to bilinear fractal interpolation surfaces on rectangular grids \cite{KRZ18}.

In this paper, we study the box dimension of generalized affine FIFs without the assumption that $S$ has the uniform sum. More precisely, we require that the following conditions are satisfied:
\begin{itemize}
  \item[(A4)] $\{x_i\}_{i=0}^N$ are uniformly spaced on $[x_0,x_N]$, that is, $x_i-x_{i-1}=(x_N-x_0)/N$ for all $1\leq i\leq N$,
  \item[(A5)] $S$ is a Lipschitz function, that is, there is a constant $\lambda_S>0$, such that $|S(x')-S(x'')|\leq \lambda_S |x'-x''|$ for all $x',x''\in [x_0,x_N]$,
  \item[(A6)] $S$ is positive on $I$, that is, $S(x)>0$ for all $x\in [x_0,x_N]$.
\end{itemize}
We prove that if the conditions (A1)-(A6) are satisfied, then $\Bdim \Gamma f$ equals either $1$ or $1+\log (\rho_S)/\log N$, where $\rho_S$ is a constant dependent on the function $S$. See Section~3 for the explicit definition of $\rho_S$.

We remark that there are many applications of FIFs, see  \cite{MaHa92,VDL94} for examples. Also, there are many works on the box dimension and Hausdorff dimension of fractal interpolation functions and fractal interpolation surfaces. We refer the readers to
\cite{BRS20,BDD06,Fen08,LR21} and the references therein. Other properties of FIFs have been studied in various papers. Please see \cite{CKV15,Nav10,WY13} for examples.
 

The paper is organized as follows. In section~2, we recall the definition of box dimension and present some properties of generalized affine FIFs. In Section~3, we introduce two sequences of vertical scaling matrices, and prove that the limits of spectral radii of these two sequences of matrices coincide under certain conditions. By using this result, in Section~4, we estimate the upper bound and the lower bound of oscillations of generalized affine FIFs, and obtain explicit formula of their box dimension under certain conditions. In Section~5, we present an example to explain our result. We also make some further remarks in this section.

\section{Preliminaries}\label{sec:FIF}

\subsection{Definition of box dimension}
For any $k_1, k_2\in\mathbb{Z}$ and $\varepsilon>0$, we call $[k_1\varepsilon,(k_1+1)\varepsilon] \times [k_2\varepsilon,(k_2+1)\varepsilon]$ an $\varepsilon$-coordinate square in $\mathbb{R}^2$. Let $E$ be a bounded set in $\mathbb{R}^2$ and $\mathcal{N}_E(\varepsilon)$ the number of $\varepsilon$-coordinate squares intersecting $E$. We define
\begin{equation}\label{eq:box-dim-def}
	\overline{\dim}_{B} E=\varlimsup_{\gep\to 0+}\frac{\log \mathcal{N}_{E}(\gep)}{\log1/\gep}
	\quad\text{and}	\quad
	\underline{\dim}_{B} E=\varliminf_{\gep\to 0+}\frac{\log \mathcal{N}_{E}(\gep)}{\log1/\gep},
\end{equation}
and call them the \emph{upper box dimension} and the \emph{lower box dimension} of $E$. Respectively, if $\overline{\dim}_{B} E=\underline{\dim}_{B} E$, then we use  $\dim_B E$ to denote the common value and call it the \emph{box dimension} of $E$.
It is easy to see  that in the definition of the upper and lower box dimensions, we can only consider $\gep_k=\frac{1}{N^k}$, where $k \in \mathbb{Z}^+$. That is,
\begin{equation}\label{eq:box-dim-def2}
  	\overline{\dim}_{B} E=\varlimsup_{k \to \infty}\frac{\log \mathcal{N}_{E}(\gep_{k})}{k\log N}
  	\quad\text{and}\quad
  	\underline{\dim}_{B} E=\varliminf_{k\to \infty}\frac{\log \mathcal{N}_{E}(\gep_{k})}{k\log N}.
\end{equation}

It is also well known that $\underline{\dim}_{B}E\geq 1$ when $E$ is the graph of a continuous function on a closed interval of $\mathbb{R}$. Please see ~\cite{Fal90} for details.

\subsection{Some properties of generalized affine FIFs}

In this subsection, we assume that $f$ is a generalized affine FIF satisfying conditions (A1)-(A3) and \eqref{eq:1-6}.  From \eqref{eq:1-5}, for each $1\leq i\leq N$,
\[
  W_i(x,f(x))=(L_i(x),f(L_i(x)))
\]
so that $f(L_i(x))=F_i(x,f(x))$. Combining this with \eqref{eq:1-6},
\begin{equation}\label{eq:2-1}
  f(L_i(x))= S(L_i(x)) (f(x)-b(x)) +h(L_i(x))
\end{equation}
for all $x\in [x_0,x_N]$ and $1\leq i\leq N$.

Let $g$ be the linear function on $[0,1]$ satisfying $g(0)=x_0$ and $g(1)=x_N$. Then $g^{-1}$ is a linear function on $[x_0,x_N]$. Write $t_i=g^{-1}(x_i)$ for $0\leq i\leq N$. Define
\begin{equation}\label{eq:def-f*}
 f^*(t)=f(g(t))-b(g(t)), \quad t\in [0,1].
\end{equation}

The following result is well-known. However, we have not seen the proof in other papers. Hence, we present the proof for readers' convenience.
\begin{lem}
 For each $1\leq i\leq N$, let $L^*_i$ be the linear function on $[0,1]$ satisfying $L^*_i(0)=t_{i-1}$ and $L^*_i(1)=t_{i}$. Furthermore, we define
 \begin{equation}\label{eq:lem2-1}
   F^*_i(t,y)=S^*(L^*_i(t)) y +h^*(L^*_i(t)), \quad t\in [0,1], y\in \bR,
 \end{equation}
 where $S^*(t)=S(g(t))$ and $h^*(t)=h(g(t))-b(g(t))$ for $t\in [0,1]$.
 Then $f^*$ is a generalized affine FIF determined by the IFS $\{(L^*_i(t), F^*_i(t,y))\}_{i=1}^N$.
\end{lem}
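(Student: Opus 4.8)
The plan is to invoke the uniqueness part of Barnsley's theorem: the IFS $\{(L^*_i(t),F^*_i(t,y))\}_{i=1}^N$ determines a \emph{unique} continuous function on $[0,1]$ whose graph is invariant, and this function is characterized by the functional equation $f^*(L^*_i(t))=F^*_i(t,f^*(t))$ for all $t\in[0,1]$ and $1\le i\le N$. Hence it suffices to check two things: first, that $\{(L^*_i,F^*_i)\}_{i=1}^N$ genuinely has the form of a generalized affine IFS, so that the theorem applies; and second, that the explicitly defined $f^*$ satisfies this functional equation. Continuity of $f^*$ is immediate from \eqref{eq:def-f*}, since $f$, $b$ and $g$ are continuous.

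First I would establish the conjugacy identity
\[
  g(L^*_i(t))=L_i(g(t)),\qquad t\in[0,1],\ 1\le i\le N.
\]
Both sides are linear in $t$, because $g$ is linear by construction and $L_i$ is linear by (A3); and they agree at the two endpoints, since $g(L^*_i(0))=g(t_{i-1})=x_{i-1}=L_i(x_0)=L_i(g(0))$ and likewise $g(L^*_i(1))=g(t_i)=x_i=L_i(x_N)=L_i(g(1))$. Two linear functions on $[0,1]$ agreeing at $0$ and $1$ coincide, which gives the identity.

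Next I would verify the functional equation. Replacing $x$ by $g(t)$ in \eqref{eq:2-1} and using the conjugacy identity to rewrite $L_i(g(t))$ as $g(L^*_i(t))$, we obtain
\[
  f(g(L^*_i(t)))=S(g(L^*_i(t)))\bigl(f(g(t))-b(g(t))\bigr)+h(g(L^*_i(t))).
\]
Subtracting $b(g(L^*_i(t)))$ from both sides and invoking the definitions $f^*(s)=f(g(s))-b(g(s))$, $S^*(s)=S(g(s))$ and $h^*(s)=h(g(s))-b(g(s))$, the left side becomes $f^*(L^*_i(t))$, the scaling coefficient becomes $S^*(L^*_i(t))$, the factor $f(g(t))-b(g(t))$ becomes $f^*(t)$, and the remaining terms combine into $h^*(L^*_i(t))$. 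This yields
\[
  f^*(L^*_i(t))=S^*(L^*_i(t))\,f^*(t)+h^*(L^*_i(t))=F^*_i(t,f^*(t)),
\]
exactly the desired equation.

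Finally I would confirm that $\{(L^*_i,F^*_i)\}$ is a bona fide generalized affine IFS. The maps $L^*_i$ are the linear contractions onto $[t_{i-1},t_i]$ with the correct endpoint conditions; $S^*=S\circ g$ is continuous with $|S^*(t)|<1$; and $h^*=h\circ g-b\circ g$ is piecewise linear with breakpoints exactly at the $t_i$ and with $h^*(t_i)=y_i-b(x_i)$. The transformed data points are $(t_i,y_i-b(x_i))$, and since $b$ is linear with $b(x_0)=y_0$ and $b(x_N)=y_N$ the endpoints become $(0,0)$ and $(1,0)$; consequently the associated linear function in the analogue of (A1) is identically $0$, which is why the form \eqref{eq:lem2-1} has no subtracted linear term and is fully consistent with \eqref{eq:1-6}. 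None of these verifications is difficult; the only step requiring a genuine idea is spotting the conjugacy $g\circ L^*_i=L_i\circ g$, after which the argument is a direct substitution.
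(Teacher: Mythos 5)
Your proposal is correct and follows essentially the same route as the paper: the key step in both is the conjugacy $g\circ L_i^*=L_i\circ g$ (proved by comparing two linear maps at the endpoints $0$ and $1$), followed by substituting $x=g(t)$ into \eqref{eq:2-1} to verify $f^*(L_i^*(t))=F_i^*(t,f^*(t))$. Your additional verification that $\{(L_i^*,F_i^*)\}$ satisfies the hypotheses of Barnsley's theorem (with the transformed baseline function identically zero) is a harmless elaboration of what the paper leaves implicit.
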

\begin{proof}
  Fix $1\leq i\leq N$. Notice that both $g\circ L_i^*$ and $L_i\circ g$ are linear functions on $[0,1]$. Thus, from
  \begin{align*}
    g(L_i^*(0))=x_{i-1}= L_i( g(0)),\quad
    g(L_i^*(1))=x_{i}= L_i( g(1)),
  \end{align*}
  we have $g\circ L_i^*=L_i\circ g$ on $[0,1]$.
  Now, given $t\in [0,1]$, we write $x=g(t)$. Then $g( L_i^*(t))=L_i(x)$. By definitions of $S^*$ and $h^*$,
  \begin{align*}
  &\; S^*(L^*_i(t)) f^*(t) +h^*(L^*_i(t)) \\
    =&\; S\big(g(L_i^*(t))\big)\Big( f(g(t))-b(g(t)) \Big) + \Big( h\big(g(L_i^*(t))\big) - b\big(g(L_i^*(t))\big)\Big)\\
    =&\; S(L_i(x))\big( f(x)-b(x) \big) + h(L_i(x)) - b(L_i(x)) \\
    =&\;  f(L_i(x) ) -b(L_i(x)),
  \end{align*}
  where the last equality follows from \eqref{eq:2-1}.
  Thus, from \eqref{eq:def-f*} and \eqref{eq:lem2-1},
  \begin{align*}
    f^*(L_i^*(t))&=f\big(g( L_i^*(t))\big) - b\big(g( L_i^*(t))\big)=f(L_i(x))-b(L_i(x)) \\
    &=S^*(L^*_i(t)) f^*(t) +h^*(L^*_i(t))=F_i^*(t,f^*(t)),
  \end{align*}
  which implies that $\Gamma f^*=\{(t,f^*(t)):\, t\in [0,1]\}$ is the invariant set of $\{W_i^*\}_{i=1}^N$, where $W_i^*(t,y)=(L_i^*(t),F_i^*(t,y))$. This completes the proof of the lemma.
\end{proof}


Notice that both $g$ and $b$ are linear function. Thus, it is easy to check that
\[
  \varphi\big((t,f^*(t))\big)=\big(g(t),f(g(t))\big),\quad t\in [0,1],
\]
is a bi-Lipschitz map from $\Gamma f^*$ to $\Gamma f$. Hence
\[
  \lowBdim \Gamma f^*=\lowBdim \Gamma f, \quad \upBdim \Gamma f^*=\upBdim \Gamma f.
\]

Since $f^*(0)=f^*(1)=0$, in the sequel of this paper, we always assume that $x_0=0$, $x_N=1$ and $y_0=y_N=0$. From now on, we write $I=[0,1]$, and $I_i=[\frac{i-1}{N},\frac{i}{N}]$ for all $1\leq i\leq N$. In this case, $b(x)\equiv 0$ on $I$. Hence, from \eqref{eq:2-1}, we have the following useful property:
\begin{equation}\label{eq:FIF-rec}
  f(x)=S(x) f(L_i^{-1}(x))+h(x)
\end{equation}
for all $x\in I_i$, where $1\leq i\leq N$.

\section{Vertical scaling matrices}

In this section, we assume that the vertical scaling function $S$ satisfies the condition (A5), i.e., there exists a constant $\gl_S>0$, such that
\begin{equation}\label{eq:3-1}
	|S(x')-S(x'')|\leq \lambda_S|x'-x''|,	\quad x',x'' \in I.
\end{equation}


Given a closed interval $E=[a,b]$, for each $k\in \mathbb{Z}^+$ and $1\leq j\leq N^k$, we write
\begin{equation*}
	E_j^k=\Big[a+\frac{j-1}{N^k}(b-a),a+\frac{j}{N^k}(b-a)\Big].
\end{equation*}
It is clear that $I_j^k=[\frac{j-1}{N^k},\frac{j}{N^k}]$. For simplicity, for each $1\leq i \leq N$, we write
\begin{equation*}
	I_{i,j}^k=(I_i)_j^k=\Big[\frac{i-1}{N}+\frac{j-1}{N^{k+1}},\frac{i-1}{N}+\frac{j}{N^{k+1}}\Big].
\end{equation*}




\subsection{Two sequences of vertical scaling matrices}

Let $k$ be a positive integer. Given $1\leq i \leq N$ and $1\leq j \leq N^k$, we define
\[
  \overline{s}_{i,j}^k=\max_{x \in I_{i,j}^k} |S(x)|.
\]
It is clear that $\overline{s}_{i,j}^k=\max_{x \in I_{j}^k} |S(L_i(x))|.$
In order to calculate the box dimension of FIF, we introduce an $N^k \times N^k $ matrix $M_k$ as follows,
\begin{align*}
	\begin{pmatrix}
	\overline{s}_{1,1}^k & \cdots & \overline{s}_{1,N}^k & & & & & & & \\
	 & & &\overline{s}_{1,N+1}^k & \cdots & \overline{s}_{1,2N}^k & & & & \\
	 & & & & & & \ddots & & & \\
	& & & & & & &\overline{s}_{1,{N}^{k}-N+1}^k & \cdots & \overline{s}_{1,{N}^{k}}^k  \\
	\overline{s}_{2,1}^k & \cdots & \overline{s}_{2,N}^k & & & & & & & \\
	 & & & \overline{s}_{2,N+1}^k & \cdots & \overline{s}_{2,2N}^k & & & & \\
	 & & & & & & \ddots & & & \\
	& & & & & & &\overline{s}_{2,{N}^{k}-N+1}^k & \cdots & \overline{s}_{2,{N}^{k}}^k  \\
	\vdots & & \vdots & \vdots & & \vdots &  & \vdots & & \vdots \\
	\overline{s}_{N,1}^k & \cdots & \overline{s}_{N,N}^k & & & & & & & \\
	 & & &\overline{s}_{N,N+1}^k & \cdots & \overline{s}_{N,2N}^k & & & & \\
	 & & & & & & \ddots & & & \\
	& & & & & & &\overline{s}_{N,{N}^{k}-N+1}^k & \cdots & \overline{s}_{N,{N}^{k}}^k  \\
	\end{pmatrix}.
\end{align*}
That is, for $1\leq i\leq N$, $1\leq \ell\leq N^{k-1}$ and $1\leq j\leq N^k$,
\begin{equation}\label{eq:Mk-def}
	(M_k)_{(i-1)N^{k-1}+\ell,j}=\begin{cases}
    \overline{s}^k_{i,j}, & \mbox{if } (\ell-1)N< j\leq \ell N, \\
    0, & \mbox{otherwise}.
  \end{cases}
\end{equation}

Similarly, we define $\underline{s}_{i,j}^k=\min_{x \in I_{i,j}^k} |S(x)|$ and define another $N^k \times N^k$ matrix $M'_k$ by
replacing $\overline{s}_{i,j}^k$ with $\underline{s}_{i,j}^k$  in \eqref{eq:Mk-def}. Both $M_k$ and $M'_k$ are called \emph{vertical scaling matrices} with level-$k$.




Now we recall some notations and definitions in matrix analysis \cite{HorJoh90}.
Given a matrix $X=(X_{ij})_{n\times n}$, we say $X$ is \emph{nonnegative} (resp. \emph{positive}), denoted by $X\geq 0$ (resp. $X>0$), if $X_{ij}\geq0$ (resp. $X_{ij}>0$) for all $i$ and $j$. Let $Y=(Y_{ij})_{n\times n}$ be another matrix. We write $X\geq Y$ (resp. $X>Y$) if $X_{ij}\geq Y_{ij}$ (resp. $X_{ij}>Y_{ij}$) for all $i$ and $j$. Similarly, given $u=(u_1,\ldots,u_n),v=(v_1,\ldots,v_n)\in \bR^n$, we write $u\geq v$ (resp. $u>v$) if $u_i\geq v_i$ (resp. $u_i>v_i$) for all $i$.

A nonnegative matrix $X=(X_{ij})_{n\times n}$ is called \emph{irreducible} if for any $i,j\in \{1,\ldots,n\}$, there exists a finite sequence $i_0,\ldots,i_t\in \{1,\ldots,n\}$, such that $i_0=i,i_t=j$ and $X_{i_{\ell-1},i_\ell}>0$ for all $1\leq \ell \leq t$. $X$ is called \emph{primitive} if there exists $k\in \bZ^+$, such that $X^k>0$. It is clear that a primitive matrix is irreducible.

The following lemma is well known. Please see \cite[Chapter 8]{HorJoh90} for details.
\begin{lem}[Perron-Frobenius Theorem]\label{th:PF}
Let $X=(X_{ij})_{n\times n}$ be an irreducible nonnegative matrix. Then
\begin{enumerate}
	\item $\rho(X)$, the spectral radius of $X$, is positive,
	\item $\rho(X)$ is an eigenvalue of $X$ and has a  positive eigenvector,
	\item $\rho(X)$ increases if any element of $X$ increases.
\end{enumerate}
\end{lem}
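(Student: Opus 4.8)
\emph{Proof proposal.} The plan is to prove the three assertions together, extracting everything from a single positive eigenpair produced by a fixed-point argument. The combinatorial input I would isolate first is that for an irreducible nonnegative $X$ the matrix $(I+X)^{n-1}$ is strictly positive: writing $(I+X)^{n-1}=\sum_{m=0}^{n-1}\binom{n-1}{m}X^{m}$ with all terms nonnegative, the diagonal is at least the $m=0$ term $I$, while for $i\ne j$ irreducibility provides a directed path from $i$ to $j$ of some length $m\le n-1$, making $(X^{m})_{ij}>0$; hence $\bigl((I+X)^{n-1}\bigr)_{ij}>0$ for all $i,j$.

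Next I would produce the eigenvector. On the simplex $\Delta=\{x\ge 0:\sum_i x_i=1\}$, which is compact and convex, the map $T(x)=(I+X)x/\lVert (I+X)x\rVert_{1}$ is continuous and sends $\Delta$ into $\Delta$, since $(I+X)x\ge x$ is nonzero whenever $x\in\Delta$. Brouwer's fixed-point theorem then yields $x^{*}\in\Delta$ with $(I+X)x^{*}=\mu x^{*}$, that is $Xx^{*}=\lambda x^{*}$ with $\lambda=\mu-1$. Applying the first step, $\mu^{n-1}x^{*}=(I+X)^{n-1}x^{*}>0$ forces $x^{*}>0$; and because irreducibility forbids a zero row of $X$ (for $n\ge 2$), $Xx^{*}$ is a nonzero nonnegative vector, whence $\lambda>0$. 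I expect this step to be the main obstacle, as it must simultaneously deliver a genuine eigenvector, its strict positivity, and positivity of the eigenvalue.

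To finish (1) and (2) I would show $\lambda=\rho(X)$. Applying the first two steps to $X^{T}$ (also irreducible and nonnegative) gives a positive left eigenvector $w>0$ with $w^{T}X=\sigma w^{T}$ and $\sigma>0$. For an arbitrary complex eigenpair $Xz=\gamma z$, taking componentwise moduli and using $X\ge 0$ yields $\abs{\gamma}\,\abs{z}\le X\abs{z}$; pairing on the left with $w$ gives $\abs{\gamma}\,w^{T}\abs{z}\le w^{T}X\abs{z}=\sigma\,w^{T}\abs{z}$, and since $w^{T}\abs{z}>0$ we conclude $\abs{\gamma}\le\sigma$. Evaluating $w^{T}Xx^{*}$ by both eigenrelations gives $\sigma\,w^{T}x^{*}=\lambda\,w^{T}x^{*}$, so $\sigma=\lambda$ (as $w^{T}x^{*}>0$); hence $\lambda=\sigma\ge\abs{\gamma}$ for every eigenvalue $\gamma$, i.e. $\lambda=\rho(X)>0$.

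For the monotonicity (3), suppose $Y\ge X$ with $Y\ne X$. Then $Y$ inherits the positivity pattern of $X$, so $Y$ is irreducible and the previous steps provide a positive right eigenvector $v>0$ with $Yv=\rho(Y)v$. Keeping the positive left eigenvector $w>0$ of $X$, I would evaluate $w^{T}Yv$ in two ways: on one hand $w^{T}Yv=\rho(Y)\,w^{T}v$, while $w^{T}Xv=\rho(X)\,w^{T}v$, so subtracting gives $(\rho(Y)-\rho(X))\,w^{T}v=w^{T}(Y-X)v$. Since $Y-X\ge 0$ is nonzero and $w,v>0$, the right-hand side is strictly positive, and as $w^{T}v>0$ this forces $\rho(Y)>\rho(X)$, completing the proof.
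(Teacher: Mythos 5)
Your proof is correct, but note that the paper does not actually prove this lemma at all: it is stated as a known result and dispatched with a citation to Horn--Johnson, Chapter~8. So any comparison is between your argument and the standard textbook treatments. What you give is a clean, self-contained version of the classical route: $(I+X)^{n-1}>0$ from irreducibility, a Brouwer fixed point on the simplex to manufacture a nonnegative eigenpair, strict positivity of the eigenvector and positivity of the eigenvalue from the two previous facts, identification with $\rho(X)$ by pairing against a positive left eigenvector of $X^{T}$, and strict monotonicity from the bilinear identity $(\rho(Y)-\rho(X))\,w^{T}v=w^{T}(Y-X)v$. Each step checks out, including the observation that $Y\geq X$ preserves irreducibility and that $w^{T}(Y-X)v>0$ because $Y-X$ is nonnegative and nonzero while $w,v>0$. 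Horn--Johnson instead develop the theorem through the Collatz--Wielandt variational characterization $\rho(X)=\max_{x\geq 0,\,x\neq 0}\min_{i:\,x_i>0}(Xx)_i/x_i$, which avoids Brouwer and yields the min-max formula as a byproduct (useful elsewhere in matrix analysis); your approach is shorter if one is willing to take Brouwer as given, and it delivers the left-eigenvector duality that makes part (3) a one-line computation. The only caveat worth flagging is the degenerate case $n=1$ with $X=(0)$, where (1) fails under the convention that every $1\times 1$ matrix is irreducible; you implicitly exclude it by invoking ``no zero row for $n\geq 2$,'' and it is irrelevant to the paper's application where $n=N^{k}\geq 2$, but a sentence fixing the convention would make the statement airtight.
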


\begin{lem}\label{lem:primitive}
  Assume that the vertical scaling function $S$ is not identically zero on every subinterval of $I$. Then
  $(M_k)^k>0$ for all $k\in \bZ^+$. As a result, $M_k$ is primitive for all $k\in \bZ^+$.
\end{lem}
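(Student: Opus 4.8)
The plan is to read off the sparsity pattern of $M_k$ from \eqref{eq:Mk-def}, recognize it as the transition structure of a de Bruijn-type graph, and exploit the fact that in such a graph any two vertices are joined by a directed walk of length exactly $k$. First I note that the hypothesis does all the analytic work up front: since each $I^k_{i,j}$ is a nondegenerate subinterval of $I$ and $S$ is not identically zero on any subinterval, we have $\overline{s}^k_{i,j}=\max_{x\in I^k_{i,j}}|S(x)|>0$ for every admissible $i,j$. Consequently every entry of $M_k$ that is not forced to vanish by the index condition in \eqref{eq:Mk-def} is strictly positive. Because $\big((M_k)^k\big)_{r,r'}=\sum\prod_{m=1}^{k}(M_k)_{r_{m-1},r_m}$, summed over all chains $r=r_0,r_1,\dots,r_k=r'$ with nonnegative summands, it then suffices to produce, for each ordered pair $(r,r')$, a single chain along which every factor is positive; that is, a directed walk of length $k$ from $r$ to $r'$ in the graph $G_k$ whose edges are the nonzero positions of $M_k$.

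Next I would make the combinatorial structure explicit. Encode each index $m\in\{1,\dots,N^k\}$ by the base-$N$ representation $m-1=\sum_{t=0}^{k-1}d_tN^{t}$, i.e. by the word $W(m)=(d_{k-1},\dots,d_1,d_0)$ over the alphabet $\{0,\dots,N-1\}$. Writing a row index as $r=(i-1)N^{k-1}+\ell$ with $1\le\ell\le N^{k-1}$, the leading digit of $W(r)$ is $d_{k-1}=i-1$ while the remaining digits encode $\ell-1=\sum_{t=0}^{k-2}d_tN^{t}$. The index condition $(\ell-1)N<j\le \ell N$ in \eqref{eq:Mk-def} says precisely that $j-1=(\ell-1)N+e$ for some $e\in\{0,\dots,N-1\}$, and unwinding the powers of $N$ shows this is equivalent to $W(j)=(d_{k-2},d_{k-3},\dots,d_0,e)$. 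In words: $(M_k)_{r,j}\neq 0$ exactly when $W(j)$ is obtained from $W(r)$ by deleting the leading symbol and appending an arbitrary trailing symbol. This ``shift-and-append'' rule is the heart of the argument, and verifying it carefully from \eqref{eq:Mk-def} is the one step that genuinely requires attention; once it is in hand, the rest is automatic.

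Finally I would construct the required walks. Fix $r$ and a target $r'$ with $W(r')=(e_1,e_2,\dots,e_k)$. Starting from $W(r)$ and applying the shift-and-append rule $k$ times, appending $e_1,e_2,\dots,e_k$ in this order, after $m$ steps the word is $(d_{k-1-m},\dots,d_0,e_1,\dots,e_m)$, so after $k$ steps every original symbol of $W(r)$ has been flushed out and the terminal word is exactly $(e_1,\dots,e_k)=W(r')$. This exhibits a directed walk $r=r_0\to r_1\to\cdots\to r_k=r'$ in $G_k$, hence a strictly positive summand in $\big((M_k)^k\big)_{r,r'}$. Since $(r,r')$ was arbitrary, $(M_k)^k>0$, and $M_k$ is primitive by definition. (The same computation in fact shows the length-$k$ walk is unique, which is the familiar property of de Bruijn graphs, but existence alone is all that is needed here.)
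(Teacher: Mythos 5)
Your proof is correct and follows essentially the same route as the paper: the paper's recurrence $t_{p+1}=N\bigl(t_p-(j_p-1)N^{k-1}-1\bigr)+\ell_p$ is exactly your ``shift-and-append'' rule written out numerically, flushing the digits of the starting index while feeding in those of the target over $k$ steps. Your de Bruijn-graph phrasing is a cleaner exposition of the identical argument, so there is nothing to add.
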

\begin{proof}
  Let $k\in \bZ^+$. By the assumption of the lemma, it is clear that  $\overline{s}_{i,j}^k>0$ for all $1\leq i\leq N$ and $1\leq j\leq N^k$.
  Now, for any $j,\ell\in \{1,2,\ldots,N^k\}$, there exist $j_1,\ldots,j_k,\ell_1,\ldots,\ell_k \in \{1,\ldots,N\}$, such that
  \begin{align*}
    j&=(j_1-1)N^{k-1}+(j_2-1)N^{k-2}+\cdots+(j_{k-1}-1)N+j_k,\\
    \ell&=(\ell_1-1)N^{k-1}+(\ell_2-1)N^{k-2}+\cdots+(\ell_{k-1}-1)N+\ell_k.
  \end{align*}
  Define $t_1=j$ and
  \[
    t_{p+1}=N(t_p-(j_p-1)N^{k-1}-1)+\ell_p, \quad 1\leq p\leq k.
  \]
  Then it is easy to see that $t_{k+1}=\ell$. From the definition of the matrix $M_k$, it is easy to see that
  $
    (M_k)_{t_p,t_{p+1}}>0
  $
  for all $1\leq p\leq k$. Thus
  \[
    \big((M_k)^k\big)_{j,\ell}\geq \prod_{p=1}^k (M_k)_{t_p,t_{p+1}}>0.
  \]
  By the arbitrariness of $j$ and $\ell$, the lemma holds.
\end{proof}

\begin{theo}\label{th:rho-M}
Assume that the the vertical scaling function $S$ is not identically zero on every subinterval of $I$.  Then for all  $k \in \bZ^+$,
\begin{equation}
	\rho(M_{k+1}) \leq \rho (M_k).
\end{equation}
As a result, $\lim_{k\to \infty} \rho(M_k)$ exists, denoted by $\rho^*$.
\end{theo}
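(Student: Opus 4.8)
The plan is to read off the combinatorial meaning of the matrices $M_k$ and then run a Collatz--Wielandt (sub-eigenvector) argument that compares $M_{k+1}$ with $M_k$ through a suitably extended Perron eigenvector. First I would re-index the rows and columns of $M_k$ by $N$-adic digit strings of length $k$, exactly as in the proof of Lemma~\ref{lem:primitive}: a column $j$ corresponds to $(d_1,\dots,d_k)$ via $j=(d_1-1)N^{k-1}+\dots+(d_{k-1}-1)N+d_k$, and the row $(i-1)N^{k-1}+\ell$ corresponds to $(i,e_1,\dots,e_{k-1})$, where $\ell$ encodes $(e_1,\dots,e_{k-1})$. Writing $\overline{S}(d_1\cdots d_n):=\max_{x\in L_{d_1}\circ\cdots\circ L_{d_n}(I)}|S(x)|$ and using $I_{i,j}^k=L_i(I_j^k)$, the defining formula \eqref{eq:Mk-def} becomes the clean statement that the only nonzero entries of $M_k$ are
\[
  (M_k)_{(i,e_1,\dots,e_{k-1}),\,(e_1,\dots,e_{k-1},m)}=\overline{S}(i\,e_1\cdots e_{k-1}\,m),\qquad 1\le i,m\le N,
\]
i.e. the matrix sends the string $(i,e_1,\dots,e_{k-1})$ to $(e_1,\dots,e_{k-1},m)$ (drop the first digit, append a new one) with weight the maximum of $|S|$ over the corresponding level-$(k+1)$ interval. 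The single fact I exploit is the nesting monotonicity: appending a digit shrinks the interval, so $\overline{S}(w\,m)\le\overline{S}(w)$ for every string $w$ and every $m$.

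Next I would invoke the standard sub-eigenvector principle: if $A\ge 0$ and some $w>0$ satisfies $Aw\le c\,w$ entrywise, then $\rho(A)\le c$ (pair $w$ with a nonnegative eigenvector of $A^{T}$ for $\rho(A)$). I apply this to $A=M_{k+1}^{T}$, whose spectral radius equals $\rho(M_{k+1})$. Let $\rho:=\rho(M_k)$ and let $w^{(k)}>0$ be the left Perron eigenvector of $M_k$, i.e. the positive Perron eigenvector of $M_k^{T}$; this exists because $M_k$ (hence $M_k^{T}$) is irreducible by Lemma~\ref{lem:primitive}, together with Lemma~\ref{th:PF}. Concretely, for every length-$k$ string $(e_1,\dots,e_{k-1},e_k)$ the eigen-relation reads
\[
  \sum_{i=1}^{N}\overline{S}(i\,e_1\cdots e_{k-1}\,e_k)\,w^{(k)}_{(i,e_1,\dots,e_{k-1})}=\rho\,w^{(k)}_{(e_1,\dots,e_{k-1},e_k)}.
\]
I then extend $w^{(k)}$ to a vector $w$ indexed by length-$(k+1)$ strings by forgetting the last digit, $w_{(d_1,\dots,d_{k+1})}:=w^{(k)}_{(d_1,\dots,d_k)}>0$.

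The verification is then a one-line matching. For a column $C=(e_1,\dots,e_k,m)$ of $M_{k+1}$ the only contributing rows are $(i,e_1,\dots,e_k)$, $1\le i\le N$, whence
\[
  (M_{k+1}^{T}w)_{C}=\sum_{i=1}^{N}\overline{S}(i\,e_1\cdots e_k\,m)\,w^{(k)}_{(i,e_1,\dots,e_{k-1})}.
\]
Applying the $m$-independent nesting bound $\overline{S}(i\,e_1\cdots e_k\,m)\le\overline{S}(i\,e_1\cdots e_k)$ termwise, and then the displayed eigen-relation with free last digit $e_k$, gives $(M_{k+1}^{T}w)_{C}\le\rho\,w^{(k)}_{(e_1,\dots,e_k)}=\rho\,w_C$. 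Hence $M_{k+1}^{T}w\le\rho\,w$, and the sub-eigenvector principle yields $\rho(M_{k+1})=\rho(M_{k+1}^{T})\le\rho=\rho(M_k)$. Finally $\rho(M_k)>0$ for every $k$ by Lemma~\ref{th:PF}, so the sequence $(\rho(M_k))_k$ is nonincreasing and bounded below, hence convergent, which defines $\rho^{*}$.

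I expect the only genuine obstacle to be structural rather than analytical: decoding \eqref{eq:Mk-def} into the digit-string/interval-nesting picture, and, above all, spotting that one should extend the \emph{left} eigenvector by dropping the \emph{last} coordinate. That is precisely what makes the $m$-independent bound $\overline{S}(\cdots e_k\,m)\le\overline{S}(\cdots e_k)$ collapse onto the term indexed by $e_k$ in the Perron relation; the symmetric attempts (a right eigenvector, or forgetting the first coordinate) fail to close because the pulled-out scaling factor then no longer matches a single term of the eigen-equation. I would also note that Lipschitz continuity of $S$ is not actually used here — the argument needs only that $S$ does not vanish on any subinterval, via Lemmas~\ref{lem:primitive} and~\ref{th:PF}.
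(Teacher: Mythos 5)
Your argument is correct, and it rests on exactly the same key fact as the paper's proof --- the nesting monotonicity $\overline{s}^{\,k}_{i,\ell}\geq \overline{s}^{\,k+1}_{i,j}$ for $(\ell-1)N<j\leq \ell N$ --- but it is organized dually. The paper introduces an explicit auxiliary $N^{k+1}\times N^{k+1}$ matrix $\wdtM_k$ (see \eqref{eq:Mktilde-def}), proves $\wdtM_k\geq M_{k+1}$ entrywise (your nesting bound) so that $\rho(\wdtM_k)\geq\rho(M_{k+1})$ by monotonicity of the spectral radius, and then shows $\rho(\wdtM_k)=\rho(M_k)$ exactly by \emph{collapsing a right} Perron eigenvector of $\wdtM_k$ (summing its entries in blocks of $N$) into a positive eigenvector of $M_k$. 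You instead \emph{lift the left} Perron eigenvector of $M_k$ by duplication (forgetting the last digit) and verify a Collatz--Wielandt sub-eigenvector inequality $M_{k+1}^{T}w\leq\rho(M_k)\,w$ with $w>0$; note that your lifted $w$ is precisely a left Perron eigenvector of the paper's $\wdtM_k$, so the two proofs are transposes of one another. Your version merges the paper's two steps into one and avoids naming the auxiliary matrix; the paper's version isolates the exact identity $\rho(\wdtM_k)=\rho(M_k)$, which it then reuses verbatim (with $\underline{s}$ in place of $\overline{s}$, reversing the inequality) to get Theorem~\ref{th:rho-m} --- your scheme dualizes just as easily for that case. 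Your closing observations are also accurate: irreducibility from Lemma~\ref{lem:primitive} plus Lemma~\ref{th:PF} is all that is needed for the positive left eigenvector, the sub-eigenvector principle needs only a nonzero nonnegative left eigenvector of $M_{k+1}$ (which irreducibility supplies), and the Lipschitz hypothesis (A5), though a standing assumption of Section~3, plays no role in this particular theorem.
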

\begin{proof}
In order to prove the theorem, we introduce another $N^{k+1}\times N^{k+1}$ matrix $\wdtM_{k}$ as follows:
\begin{equation}\label{eq:Mktilde-def}
	(\widetilde{M}_{k})_{(i-1)N^{k}+\ell,j}=\begin{cases}
    \overline{s}^k_{i,\ell}, & \mbox{if } (\ell-1)N< j\leq \ell N, \\
    0, & \mbox{otherwise},
  \end{cases}
\end{equation}
for $1\leq i\leq N$, $1\leq \ell\leq N^k$ and $1\leq j\leq N^{k+1}$.


Firstly, we will prove
	$\rho \big( \wdtM_k \big) \geq \rho \big(M_{k+1}\big).$
Given $1\leq i \leq N$ and $1\leq \ell \leq  N^k$,
\begin{equation*}
  I_{i,\ell}^k=(I_i)^k_{\ell}=\bigcup_{j=(\ell-1)N+1}^{\ell N} (I_i)_{j}^{k+1}=\bigcup_{j=(\ell-1)N+1}^{\ell N} I_{i,j}^{k+1}
\end{equation*}
so that
\begin{equation*}
	\overline{s}_{i,\ell}^k
	=\max_{x \in I_{i,\ell}^k} |S(x)|
	= \max_{(\ell-1)N<j\leq \ell N }\max_{x \in I_{i,j}^{k+1}} |S(x)|=\max_{(\ell-1)N<j\leq \ell N } \overline{s}_{i,j}^{k+1}.
\end{equation*}
Thus, given $1\leq j\leq N^{k+1}$ with $(\ell-1)N<j\leq \ell N$, we have
$
 \overline{s}_{i,\ell}^k \geq  \overline{s}_{i,j}^{k+1}.
$
Combining this with definitions of $\wdtM_k$ and $M_{k+1}$, it is easy to see that $\wdtM_k \geq M_{k+1}$. From Perron-Frobenius Theorem, $\rho \big( \wdtM_k \big) \geq \rho \big(M_{k+1}\big)$.


Now we will prove $\rho \big(M_k\big)=\rho \big(\wdtM_k \big).$ From Perron-Frobenius Theorem,  the spectral radius $\lambda=\rho(\wdtM_k )$ is positive, and it has a  positive eigenvector $ u=(u_1,\ldots,u_{N^{k+1}})^T$. From \eqref{eq:Mktilde-def}, for $1\leq i\leq N$ and $1\leq \ell\leq N^k$,
\begin{equation}\label{eq:eigenv-Mktilde}
  \lambda u_{(i-1)N^k+\ell}=\sum_{j=1}^{N^{k+1}} (\widetilde{M}_k)_{(i-1)N^k+\ell, j} u_j = \overline{s}_{i,\ell}^k \sum_{j=(\ell-1)N+1}^{\ell N} u_{j}.
\end{equation}


 Define a vector $v=(v_1,\ldots,v_{N^k})^T$ by
$
 v_t=\sum_{j=(t-1)N+1}^{t N} u_{j},  1\leq t\leq N^k.
$ Then $v$ is a positive vector.
Notice that for $1\leq i\leq N$ and $1\leq \ell\leq N^{k-1}$,
\begin{align*}
	\sum_{t=1}^{N^k} (M_k)_{(i-1)N^{k-1}+\ell,t} v_{t}
	&=\sum_{t=(\ell-1)N+1}^{\ell N} \overline{s}_{i,t}^k  v_{t} \\
	&=\sum_{t=(\ell-1)N+1}^{\ell N}  \Big(\overline{s}_{i,t}^k \sum_{j=(t-1)N+1}^{tN}  u_{j}\Big) \\
	&=\sum_{t=(\ell-1)N+1}^{\ell N} \lambda  u_{(i-1)N^k+t} \qquad \qquad (\textrm{By \eqref{eq:eigenv-Mktilde}})\\
	&=\lambda  v_{(i-1)N^{k-1}+\ell}.
\end{align*}
Thus $M_k v=\lambda v$ so that $\lambda$ is an eigenvalue of $M_k$ with the positive eigenvector $v$. From \cite[Corollary 8.1.30]{HorJoh90},
$
	 \rho \big(M_k\big) = \lambda =\rho \big( \wdtM_k \big).
$

From the above arguments, $\rho \big(M_k\big) =\rho \big( \wdtM_k \big)\geq \rho \big(M_{k+1}\big).$ Since $\rho(M_k)>0$ for all $k$, we know that $\lim_{k\to\infty} \rho(M_k)$ exists.
\end{proof}

Similarly, we can obtain the following result.
\begin{theo}\label{th:rho-m}
Assume that the the vertical scaling function $S$ is positive on $I$. Then for all $k \in \mathbb{Z}^+$,
\begin{equation}
	\rho(M'_k) \leq \rho (M'_{k+1}).
\end{equation}
As a result, $\lim_{k\to \infty} \rho(M'_k)$ exists, denoted by $\rho_*$.
\end{theo}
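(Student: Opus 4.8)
The plan is to follow the proof of Theorem~\ref{th:rho-M} step by step, reversing every extremal estimate; the one genuinely new input is that minima behave oppositely to maxima under refinement, which is precisely what turns the decreasing conclusion there into an increasing one here. I would begin by recording the role of the stronger hypothesis. Since $S$ is continuous and positive on the compact interval $I$, the number $\delta:=\min_{x\in I}S(x)$ is strictly positive, so $\underline{s}_{i,j}^k\geq \delta>0$ for all $i,j,k$. This is exactly where positivity on all of $I$ is needed rather than the weaker nonvanishing hypothesis of Theorem~\ref{th:rho-M}: a minimum over a subinterval can vanish even when $S$ is not identically zero on it. With every entry strictly positive, the argument of Lemma~\ref{lem:primitive} applies verbatim to give $(M'_k)^k>0$, so $M'_k$ is primitive, and Perron--Frobenius (Lemma~\ref{th:PF}) applies to it.

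Next, mirroring the definition \eqref{eq:Mktilde-def}, I would introduce the $N^{k+1}\times N^{k+1}$ matrix $\widetilde{M}'_k$ obtained by replacing $\overline{s}^k_{i,\ell}$ with $\underline{s}^k_{i,\ell}$ there. Its block structure is identical to that of $\widetilde{M}_k$, so the reasoning of Lemma~\ref{lem:primitive} again shows it is irreducible, and Perron--Frobenius supplies a positive spectral radius $\lambda=\rho(\widetilde{M}'_k)$ with a positive eigenvector $u$. The key estimate is the reversed monotonicity of minima: because $I_{i,\ell}^k=\bigcup_{(\ell-1)N<j\leq \ell N} I_{i,j}^{k+1}$, the minimum over the coarser interval is no larger than the minimum over any of its refinements, so $\underline{s}_{i,\ell}^k\leq \underline{s}_{i,j}^{k+1}$ whenever $(\ell-1)N<j\leq \ell N$. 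Hence $\widetilde{M}'_k\leq M'_{k+1}$ entrywise, and part~(3) of Perron--Frobenius yields $\rho(\widetilde{M}'_k)\leq \rho(M'_{k+1})$. The identity $\rho(M'_k)=\rho(\widetilde{M}'_k)$ then follows by the same eigenvector-collapsing computation as in Theorem~\ref{th:rho-M}: setting $v_t=\sum_{j=(t-1)N+1}^{tN}u_j>0$ produces a positive eigenvector of $M'_k$ with eigenvalue $\lambda$, and \cite[Corollary~8.1.30]{HorJoh90} identifies $\lambda$ with $\rho(M'_k)$. Chaining these gives $\rho(M'_k)=\rho(\widetilde{M}'_k)\leq \rho(M'_{k+1})$, the desired monotonicity.

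Finally, for the existence of the limit I would observe that the increasing sequence $\{\rho(M'_k)\}$ is bounded above. Each row of $M'_k$ has exactly $N$ nonzero entries, and each equals a maximum of $|S|$ over a compact subinterval, hence is strictly less than $1$; thus $\rho(M'_k)$ is at most the maximal row sum, which is less than $N$. (Equivalently, $\underline{s}^k_{i,j}\leq \overline{s}^k_{i,j}$ gives $M'_k\leq M_k$, so $\rho(M'_k)\leq \rho(M_k)\leq \rho(M_1)$ by Theorem~\ref{th:rho-M}.) A bounded increasing sequence converges, so $\rho_*:=\lim_{k\to\infty}\rho(M'_k)$ exists. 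I expect the only delicate point to be the direction reversal in the extremal comparison, together with checking that positivity of $S$ on all of $I$ is exactly what keeps every entry of $M'_k$ and $\widetilde{M}'_k$ strictly positive, so that the Perron--Frobenius machinery (positive eigenvectors and entrywise monotonicity) remains available.
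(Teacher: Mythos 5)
Your proposal is correct and takes essentially the same route as the paper: the paper likewise introduces $\widetilde{M}'_k$, uses that refinement reverses the extremal inequality for minima so that $\widetilde{M}'_k\leq M'_{k+1}$, identifies $\rho(\widetilde{M}'_k)=\rho(M'_k)$ by the same eigenvector-collapsing computation, and bounds the increasing sequence via $\rho(M'_k)\leq\rho(M_k)\leq\rho(M_1)$. (One cosmetic slip: the nonzero entries of $M'_k$ are minima, not maxima, of $|S|$ over subintervals, though the bound by $1$ and hence by the row sum $N$ holds either way.)
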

\begin{proof}
From the assumption of this theorem, we have $\underline{s}^k_{i,j}>0$ for all $k\in \bZ^+$, $1\leq i \leq N$ and $1\leq j \leq N^k$.

Now, for each $k\in \bZ^+$, we define another $N^{k+1} \times N^{k+1}$ matrix $\widetilde{M}'_k$ as follows:
\begin{equation}\label{eq:mktilde-def}
	(\widetilde{M}'_k)_{(i-1)N^{k}+\ell,j}=\begin{cases}
    \underline{s}^k_{i,\ell}, & \mbox{if } (\ell-1)N< j\leq \ell N, \\
    0, & \mbox{otherwise},
  \end{cases}
\end{equation}
for $1\leq i\leq N$, $1\leq \ell\leq N^k$ and $1\leq j\leq N^{k+1}$. 

 By using similar arguments as in the proof of Theorem~\ref{th:rho-M}, we have
$	\rho(M'_{k+1}) \geq \rho (\widetilde{M}'_k) = \rho (M'_k) >0.$ Since $\rho(M'_k)\leq \rho(M_k)\leq \rho(M_1)$ for all $k$, we know that $\lim_{k\to \infty} \rho(M'_k)$ exists.
\end{proof}

\begin{prop}\label{prop:rho-rho}
	Assume that the the vertical scaling function $S$ is positive on $I$. Then $\rho_*=\rho^*.$ We denote the common value by $\rho_S$.
\end{prop}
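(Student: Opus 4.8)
The plan is to show the two limits $\rho_*=\lim_k \rho(M'_k)$ and $\rho^*=\lim_k \rho(M_k)$ coincide by controlling the gap between $M_k$ and $M'_k$ entrywise, and then leveraging the monotonicity of the spectral radius for irreducible nonnegative matrices. Since $M'_k\leq M_k$ for every $k$ (as $\underline{s}_{i,j}^k\leq \overline{s}_{i,j}^k$), Perron--Frobenius gives $\rho(M'_k)\leq \rho(M_k)$, hence $\rho_*\leq\rho^*$. The whole content of the proposition is the reverse inequality, which must come from the fact that the oscillation of $S$ on each dyadic-type cell $I_{i,j}^k$ shrinks as $k\to\infty$, precisely because $S$ is Lipschitz (condition (A5)). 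So the first thing I would write down is the key estimate $\overline{s}_{i,j}^k-\underline{s}_{i,j}^k\leq \lambda_S\cdot\operatorname{diam} I_{i,j}^k=\lambda_S/N^{k+1}$, uniformly in $i,j$.

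Next I would exploit positivity to convert this additive gap into a multiplicative one. Since $S$ is positive and continuous on the compact interval $I$, there is a constant $c=\min_{x\in I}S(x)>0$, so $\underline{s}_{i,j}^k\geq c$ for all $i,j,k$. Combining with the Lipschitz estimate yields a uniform ratio bound
\begin{equation*}
  \overline{s}_{i,j}^k\leq \Big(1+\frac{\lambda_S}{c\,N^{k+1}}\Big)\,\underline{s}_{i,j}^k,
\end{equation*}
so entrywise $M_k\leq (1+\delta_k)M'_k$ where $\delta_k=\lambda_S/(cN^{k+1})\to 0$. Because the spectral radius is homogeneous ($\rho(\alpha X)=\alpha\rho(X)$ for $\alpha\geq 0$) and monotone on nonnegative matrices under the entrywise order (this is exactly part (3) of the Perron--Frobenius Theorem, Lemma~\ref{th:PF}, applicable since both matrices are irreducible by Lemma~\ref{lem:primitive}), this gives
\begin{equation*}
  \rho(M_k)\leq \rho\big((1+\delta_k)M'_k\big)=(1+\delta_k)\,\rho(M'_k).
\end{equation*}

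Finally I would pass to the limit. Taking $k\to\infty$ in the chain $\rho(M'_k)\leq \rho(M_k)\leq (1+\delta_k)\rho(M'_k)$ and using $\delta_k\to 0$ together with the fact that $\rho(M'_k)$ is bounded (it is nondecreasing by Theorem~\ref{th:rho-m} and bounded above by $\rho(M_1)$), both outer terms converge to $\rho_*$, forcing $\rho^*=\rho_*$. One technical point worth a line of care: the entrywise domination $M_k\leq(1+\delta_k)M'_k$ must respect the zero pattern of the matrices, but this is automatic since $M_k$ and $M'_k$ share identical support (the same block-diagonal sparsity structure from \eqref{eq:Mk-def}), so the scaling only touches the genuinely nonzero entries and the comparison is valid cell by cell.

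The step I expect to be the main obstacle is not the limit argument but making the monotonicity inequality $\rho(M_k)\leq(1+\delta_k)\rho(M'_k)$ fully rigorous: part (3) of Perron--Frobenius as stated only asserts that $\rho$ increases when an entry increases, so I would need to confirm it delivers the genuine inequality $\rho(A)\leq\rho(B)$ whenever $0\leq A\leq B$ with $B$ irreducible. This follows from the same theory (either by applying the statement repeatedly one entry at a time, or by the eigenvector characterization), but it is the place where the proof's logical weight actually sits, so I would state it cleanly before invoking it.
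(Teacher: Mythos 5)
Your proposal is correct and follows essentially the same route as the paper's proof: the Lipschitz bound $\overline{s}_{i,j}^k-\underline{s}_{i,j}^k\leq \lambda_S N^{-k-1}$, the positivity of $S$ to convert this into the multiplicative bound $M'_k\leq M_k\leq (1+C\lambda_S N^{-k-1})M'_k$ with $C=(\min_I S)^{-1}$, and then monotonicity and homogeneity of the spectral radius to squeeze the two limits together. The extra care you flag about justifying $\rho(A)\leq\rho(B)$ for $0\leq A\leq B$ is reasonable but the paper simply invokes the same Perron--Frobenius monotonicity, so there is no substantive difference.
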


\begin{proof}
Write $C=(\min\{S(x):\, x\in I\})^{-1}$. Then $0<C<\infty$.

From definitions of $M_k$ and $M'_k$, we have
\begin{equation*}
	0\leq \overline{s}_{i,j}^k-\underline{s}_{i,j}^k=\sup_{x \in I_{i,j}^k}|S(x)|-\inf_{x \in I_{i,j}^k} |S(x)|
	\leq \lambda_S |I_{i,j}^k| = \lambda_S N^{-k-1}
\end{equation*}
so that
\begin{align*}
	\underline{s}_{i,j}^k \leq \overline{s}_{i,j}^k
	\leq \Big(1 +\frac{\lambda_S N^{-k-1}}{\underline{s}_{i,j}^k}\Big)\underline{s}_{i,j}^k  \leq (1 +C\lambda_S N^{-k-1})\underline{s}_{i,j}^k.
\end{align*}
Thus $M'_k \leq M_k \leq (1 +C\lambda_S N^{-k-1}) M'_k$. Hence
\begin{equation*}
	\rho (M'_k) \leq \rho( M_k) \leq (1 +C\lambda_S N^{-k-1}) \rho(M'_k),
\end{equation*}
which implies that 	$\rho^*=\lim_{k \to \infty} \rho(M_k)=\lim_{k \to \infty} \rho(M'_k)=\rho_*.$
\end{proof}


\subsection{The sum function}

Now, we define a function $\gamma$ on $I$ by
\begin{equation*}
	\gamma(x)=\sum_{i=1}^{N} |S(L_i(x)|.
\end{equation*}
We call $\gamma$ the \emph{sum function} with respect to $S$ and $\{L_i\}_{i=1}^N$.
Write
\begin{equation*}
	\gamma^*=\max_{x \in I} \gamma(x), \qquad \quad \gamma_*=\min_{x \in I} \gamma(x).
\end{equation*}
For any $k\in \bZ^+$, we define
\begin{equation*}
	\overline{\gamma}_k=\max_{1 \leq j \leq N^k}
	\sum_{i=1}^{N} \overline{s}_{i,j}^k,\quad
	\underline{\gamma}_k=\min_{1 \leq j \leq N^k}
	\sum_{i=1}^{N} \underline{s}_{i,j}^k.
\end{equation*}

\begin{lem}\label{lem:lem01}
$\lim_{k \to \infty} \overline{\gamma}_k=\gamma^*$ and $\lim_{k \to \infty} \underline{\gamma}_k=\gamma_*.$
\end{lem}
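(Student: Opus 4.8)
The plan is to sandwich each of $\overline\gamma_k$ and $\underline\gamma_k$ between the target value and a quantity tending to it, exploiting that on a level-$k$ interval the Lipschitz function $S$ can oscillate by at most $O(N^{-k})$. The starting observation, already recorded in the paper, is that $\overline{s}_{i,j}^k=\max_{x\in I_j^k}|S(L_i(x))|$ and likewise $\underline{s}_{i,j}^k=\min_{x\in I_j^k}|S(L_i(x))|$, because the affine map $L_i$ of slope $1/N$ carries $I_j^k=[\frac{j-1}{N^k},\frac{j}{N^k}]$ onto $I_{i,j}^k$. Consequently $\sum_{i=1}^N\overline{s}_{i,j}^k=\sum_{i=1}^N\max_{x\in I_j^k}|S(L_i(x))|$, so $\overline\gamma_k$ first maximizes each summand $|S(L_i(\cdot))|$ over $I_j^k$ and only then sums, whereas $\gamma^*=\max_x\sum_{i=1}^N|S(L_i(x))|$ maximizes the already-formed sum. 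The entire content of the lemma is controlling this interchange of ``sum of maxima'' versus ``maximum of sum''.

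For the easy inequality $\overline\gamma_k\ge\gamma^*$, I would choose $x^*\in I$ with $\gamma(x^*)=\gamma^*$ (the maximum is attained since $\gamma$ is continuous on the compact interval $I$), locate the index $j_0$ with $x^*\in I_{j_0}^k$, and note $\max_{x\in I_{j_0}^k}|S(L_i(x))|\ge|S(L_i(x^*))|$ for each $i$; summing gives $\sum_{i=1}^N\overline{s}_{i,j_0}^k\ge\gamma(x^*)=\gamma^*$, hence $\overline\gamma_k\ge\gamma^*$. For the reverse bound I would fix $j$ and any reference point $x_j\in I_j^k$. Since $\diam L_i(I_j^k)=N^{-k-1}$, condition (A5) yields $\max_{x\in I_j^k}|S(L_i(x))|\le|S(L_i(x_j))|+\lambda_S N^{-k-1}$ for every $i$; summing over the $N$ indices gives $\sum_{i=1}^N\overline{s}_{i,j}^k\le\gamma(x_j)+\lambda_S N^{-k}\le\gamma^*+\lambda_S N^{-k}$, and taking the maximum over $j$ gives $\overline\gamma_k\le\gamma^*+\lambda_S N^{-k}$. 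Combined with the easy inequality this sandwiches $\gamma^*\le\overline\gamma_k\le\gamma^*+\lambda_S N^{-k}$ and forces $\overline\gamma_k\to\gamma^*$.

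The claim for $\underline\gamma_k$ is entirely symmetric: choosing a minimizer $x_*$ of $\gamma$ and the interval $I_{j_1}^k$ containing it gives $\underline\gamma_k\le\gamma_*$, while the oscillation estimate $\min_{x\in I_j^k}|S(L_i(x))|\ge|S(L_i(x_j))|-\lambda_S N^{-k-1}$ yields $\underline\gamma_k\ge\gamma_*-\lambda_S N^{-k}$, so $\underline\gamma_k\to\gamma_*$. I do not expect a genuine obstacle: the only delicate point is precisely the interchange of the per-summand maximization with the global one, and (A5) is exactly what bounds the resulting error by $\lambda_S N^{-k}\to0$. The one place to be careful is to use the slope-$1/N$ factor of $L_i$, i.e.\ $\diam L_i(I_j^k)=N^{-k-1}$ rather than $\diam I_j^k=N^{-k}$, when estimating the oscillation; after summing over the $N$ indices this keeps the error at $\lambda_S N^{-k}$, though any bound tending to $0$ would suffice for the limit.
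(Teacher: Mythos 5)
Your proposal is correct and follows essentially the same route as the paper: the lower bound $\overline{\gamma}_k\ge\gamma^*$ by evaluating at a point of $I_j^k$, and the upper bound $\overline{\gamma}_k\le\gamma^*+\lambda_S N^{-k}$ from the Lipschitz oscillation estimate $\lambda_S\,|I_{i,j}^k|=\lambda_S N^{-k-1}$ on each of the $N$ summands, with the symmetric argument for $\underline{\gamma}_k$. The only cosmetic difference is that you specialize to an extremizer of $\gamma$ where the paper quantifies over all points, which changes nothing.
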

\begin{proof}
Fix $k\in \bZ^+$. For any $\wdt{x}\in I$, there exists $1\leq j \leq N^k$ such that $\wdt{x} \in I_j^k$. Thus
\begin{equation*}
	\sum_{i=1}^{N} \overline{s}_{i,j}^k=\sum_{i=1}^{N} {\max_{x \in I_j^k} |S(L_i(x))|} \geq \sum\limits_{i=1}^{N}{|S(L_i(\wdt{x}))|}=\gamma (\wdt{x}),
\end{equation*}
which implies that $\overline{\gamma}_k \geq \gamma (\wdt{x})$. By the arbitrariness of $\wdt{x}$, we have $\overline{\gamma}_k \geq  \gamma^*$.

On the other hand, given $1\leq j \leq N^k$, for any $x' \in I_j^k$ and $1\leq i\leq N$, it follows from \eqref{eq:3-1} that
\begin{equation*}
	|S(L_i(x'))|\geq \max_{x \in I_{i,j}^k} |S(x)|- \lambda_S \cdot |I_{i,j}^k| =\overline{s}_{i,j}^k-\lambda_S N^{-k-1}.
\end{equation*}
Thus
\[
  \gamma^*\geq \gamma(x')=\sum_{i=1}^{N} |S(L_i(x'))|\geq \sum_{i=1}^{N} \overline{s}_{i,j}^k-\lambda_S N^{-k}.
\]
Since this inequality holds for all $1\leq j\leq N^k$, we obtain that $\gamma^*\geq \overline{\gamma}_k-\lambda_S N^{-k}$. Combining this with  $\overline{\gamma}_k \geq  \gamma^*$, we obtain that $\overline{\gamma}_k\geq   \gamma^* \geq \overline{\gamma}_k- \lambda_S N^{-k}$ for all $k\in \bZ^+$.
Thus $\lim_{k \to \infty}  \overline{\gamma}_k = \gamma^*$.

Similarly, we have $\underline{\gamma}_k\leq \gamma_* \leq \underline{\gamma}_k+\lambda_S N^{-k}$ for all $k$ so that $\lim_{k \to \infty} \underline{\gamma}_k=\gamma_*$.
\end{proof}


Notice that $M_k$ and $M'_k$ are nonnegative matrices for every $k\in \bZ^+$. Thus, from \cite[Theorem~8.1.22]{HorJoh90},
\begin{align*}	
	\rho(M_k) \leq \max_{1\leq j\leq N^k}\sum_{i=1}^{N^k} (M_k)_{i,j} =\overline{\gamma}_k,\qquad
	\rho(M'_k) \geq \min_{1\leq j\leq N^k}\sum_{i=1}^{N^k} (M'_k)_{i,j} =\underline{\gamma}_k.
\end{align*}
Hence, if $S$ is not identically zero on every subinterval of $I$, then $\rho^*\leq\gamma^*$; and if $S$ is  positive on $I$, then $\rho_*\geq\gamma_*$. From Proposition~\ref{prop:rho-rho}, if $S$ is positive, then $\gamma_*\leq \rho_S\leq \gamma^*$. In particular, if $S$ is positive and $\gamma$ is constant on $I$, then $\gamma(x)=\rho_S$ for all $x\in I$.

\begin{rem}\label{rem:bilinear}
In \cite{BaMa15}, Barnsley and Massopust study the box dimension of the graph of bilinear FIFs. In their setting, the vertical scaling function $S$ is nonnegative and linear on $I_i=[\frac{i-1}{N},\frac{i}{N}]$ for each $1\leq i\leq N$. Furthermore, they assume that $S(0)=S(1)$. Denote by $s_i=S(i/N)$ for all $0\leq i\leq N$. From the above two conditions, it is easy to check that for all $x\in I$,
\[
  \gamma(x)=\sum_{i=1}^N |S(L_i(x))|=\sum_{i=1}^N ((1-x)s_{i-1} + x s_i)=\sum_{i=1}^N s_i.
\]
\end{rem}

\section{Calculation of box dimension of generalized affine FIFs}

In this section, we always assume that $f$ is a generalized affine FIF on $I=[0,1]$ satisfying the conditions (A1)-(A5). We will estimate the lower box dimension of $\Gamma f$ under another condition (A6), and estimate the upper box dimension of $\Gamma f$ under the following weaker condition:
\begin{itemize}
  \item[(A6')] The vertical scaling function $S$ is not identically zero on every subset of $I$.
\end{itemize}
Combining these two results, we obtain the formula of $\Bdim \Gamma f$ under the conditions (A1)-(A6).
As explained in Section~2, we may assume without loss of generality that $y_0=y_N=0$ so that $b(x)\equiv 0$ on $I$.





\subsection{Box dimension of the graph of functions}


Let $g$ be a continuous function on $I=[0,1]$.
Given $k\in \mathbb{Z}^+$ and a closed interval $E\subset I$, we define
\begin{equation}
	O_k(g,E)=\sum\limits_{j=1}^{N^k} O(g,E_j^k),
\end{equation}
where we use $O(g,U)$ to denote the oscillation of $g$ on $U\subset I$, that is,
\begin{equation*}
	O(g,U)= \sup\limits_{x',x'' \in U}|g({x}')-g({x}'')|.
\end{equation*}
It is clear that $\{O_k(g,E)\}_{k=1}^\infty$ is increasing with respect to $k$.

The following lemma presents a method to estimate the upper and lower box dimensions of the graph of a function by its oscillation. Similar results can be found in \cite{Fal90,KRZ18,RSY09}.

\begin{lem}\label{lem:box-dim-new}
Let $g$ be a continuous function on $I$. Then
\begin{equation}\label{eq:lower-boxdim-est-new}
	\underline{\dim}_B(\Gamma g) \geq 1+\varliminf_{k\to\infty}\frac{\log \big( O_k(g,I)+1\big)}{k\log N}, 	\quad \mbox{and}
\end{equation}
\begin{equation}\label{eq:upper-boxdim-est-new}
  	\overline{\dim}_B(\Gamma g) \leq 1+\varlimsup_{k\to\infty} \frac{\log \big(O_k(g,I)+1\big)}{k\log N}.
\end{equation}
\end{lem}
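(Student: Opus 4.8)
The plan is to bound the square-counting function $\N_{\Gamma g}(\gep_k)$ directly at the scales $\gep_k = N^{-k}$, which by \eqref{eq:box-dim-def2} already suffice to compute both $\lowBdim$ and $\upBdim$. The geometric idea is that each partition interval $I_j^k = [(j-1)N^{-k}, jN^{-k}]$ has length exactly $\gep_k$, so the $\gep_k$-coordinate squares meeting $\Gamma g$ above $I_j^k$ form a single vertical column whose height is dictated by the oscillation $O(g, I_j^k)$. Summing these column counts then reproduces $O_k(g, I) = \sum_j O(g, I_j^k)$ exactly.

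First I would fix $k$ and, for each $1 \le j \le N^k$, let $n_j$ be the number of $\gep_k$-squares in the $j$-th column that intersect $\Gamma g$. Since $g$ is continuous, its graph over $I_j^k$ projects onto the vertical segment $[\min_{I_j^k} g,\, \max_{I_j^k} g]$ of length $O(g, I_j^k)$, and covering this segment by length-$\gep_k$ intervals yields the key two-sided estimate
\[
  \max\bigl(1,\, N^k\, O(g, I_j^k)\bigr) \le n_j \le N^k\, O(g, I_j^k) + 2,
\]
the lower bound $1$ recording that the nonempty graph always meets at least one square. Summing over $j$ and using $\N_{\Gamma g}(\gep_k) = \sum_{j=1}^{N^k} n_j$ then gives
\[
  \tfrac{1}{2} N^k \bigl(1 + O_k(g, I)\bigr) \le N^k \max\bigl(1,\, O_k(g, I)\bigr) \le \N_{\Gamma g}(\gep_k) \le N^k\bigl(O_k(g,I) + 2\bigr) \le 2 N^k\bigl(O_k(g,I) + 1\bigr),
\]
where the leftmost inequality uses the elementary fact $\max(1,t) \ge (1+t)/2$ for $t \ge 0$.

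Finally I would take logarithms, divide by $k \log N = \log(1/\gep_k)$, and pass to $\varliminf$ and $\varlimsup$ as $k \to \infty$. The multiplicative constants $\tfrac12$ and $2$ contribute terms of order $1/k$ that vanish in the limit, so the lower chain delivers \eqref{eq:lower-boxdim-est-new} and the upper chain delivers \eqref{eq:upper-boxdim-est-new}.

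The main obstacle is not conceptual but lies in the additive-constant bookkeeping of the per-column estimate: securing exactly the ``$+1$'' inside the logarithm rather than some stray additive term. This is what forces the $\max(1,\cdot)$ in the lower count, since a constant function has $O_k(g,I) = 0$ yet its graph still meets $N^k$ squares; the inequality $\max(1,t) \ge (1+t)/2$ is precisely the device that reconciles this with the desired $1 + O_k(g,I)$ form while only costing a factor that is invisible after dividing by $k \log N$.
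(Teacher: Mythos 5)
Your proposal is correct and follows essentially the same route as the paper: count the $\gep_k$-squares column by column, bound each column count above and below by $N^k O(g,I_j^k)$ plus an additive constant, and absorb the constants after dividing by $k\log N$. The only cosmetic difference is in the upper bound, where the paper switches to the covering-number variant $\widetilde{\N}$ to get ``$+1$'' per column while you keep the coordinate-square count with ``$+2$'' and a harmless factor of $2$; both are fine.
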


\begin{proof}
Define $\varepsilon_k=N^{-k}$ for all $k\in \bZ^+$ as in  Subsection~2.1.
It is clear that
\begin{equation*}
	\N_{\Gamma g}(\varepsilon_k) \geq \max\Big\{\gep_k^{-1},\varepsilon_k^{-1} \sum_{1\leq j\leq N^k} O(g,I_j^k)\Big\}\geq \frac{1}{2}N^k  \big(1+O_k(g,I)\big)
\end{equation*}
so that \eqref{eq:lower-boxdim-est-new} holds. On the other hand, we note that $\N_E(\gep)$ and $\N_E(\gep_k)$ can be replaced by $\wdt{\N}_E(\gep)$ and $\wdt{\N}_E(\gep_k)$ in \eqref{eq:box-dim-def} and \eqref{eq:box-dim-def2} respectively, where $\wdt{N}_E(\gep)$ is the smallest number of cubes of side $\gep$ that cover $E$ (see \cite{Fal90} for details). In our case, 
\begin{equation*}
	\widetilde{\N}_{\Gamma g} (\varepsilon_k)  \leq \sum_{1\leq j\leq N^k} (\varepsilon_k^{-1}O(g,I_j^k)+1)
	=  N^k  \big(O_k(g,I)+1\big).
\end{equation*}
Thus \eqref{eq:upper-boxdim-est-new} holds.
\end{proof}



\subsection{Upper box dimension estimate}
In this subsection, we will estimate the upper bound of $O_k(f,I)$ and the upper box dimension of $\Gamma f$.



\begin{lem}\label{lem:of-ofl}
There exists a constant $\beta\geq 0$ such that for any $1\leq i\leq N$, $D \subset I_i$ and any $t \in D$,
\begin{equation*}	
  \big| O(f,D) -|S(t)| \cdot O(f,L_i^{-1} (D)) \big|\leq \beta|D|,
 \end{equation*}
where $|D|=\sup\{|x'-x''|:\, x',x''\in D\}$ is the diameter of $D$.
\end{lem}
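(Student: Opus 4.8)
The plan is to exploit the self-referential identity \eqref{eq:FIF-rec}, which under the standing normalization $b\equiv 0$ reads $f(x)=S(x)f(L_i^{-1}(x))+h(x)$ for $x\in I_i$. First I would record the three ingredients that make all error terms small: $f$ is continuous on the compact interval $I$, hence bounded, so $\|f\|_\infty:=\max_{x\in I}|f(x)|<\infty$; $h$ is piecewise linear by (A2), hence Lipschitz on $I$ with some constant $\lambda_h$; and $S$ is Lipschitz with constant $\lambda_S$ by (A5). These constants depend only on the data, not on $i$, $D$ or $t$.

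Next, fixing $1\le i\le N$, a closed set $D\subset I_i$, a point $t\in D$, and arbitrary $x',x''\in D$, I would put $u'=L_i^{-1}(x')$ and $u''=L_i^{-1}(x'')$ (both in $L_i^{-1}(D)$) and apply \eqref{eq:FIF-rec} to each of $f(x'),f(x'')$. A short rearrangement splits the difference into the target main term plus three error terms:
\[
  f(x')-f(x'')=S(t)\big(f(u')-f(u'')\big)+\big(S(x')-S(t)\big)\big(f(u')-f(u'')\big)+\big(S(x')-S(x'')\big)f(u'')+\big(h(x')-h(x'')\big).
\]
Each error term is bounded by a constant times $|D|$: for the first, $|S(x')-S(t)|\le\lambda_S|x'-t|\le\lambda_S|D|$ since $x',t\in D$, while $|f(u')-f(u'')|\le 2\|f\|_\infty$; for the second, $|S(x')-S(x'')|\le\lambda_S|D|$ and $|f(u'')|\le\|f\|_\infty$; for the third, $|h(x')-h(x'')|\le\lambda_h|D|$. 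Hence, with $\beta=3\lambda_S\|f\|_\infty+\lambda_h$, the reverse triangle inequality gives $\big|\,|f(x')-f(x'')|-|S(t)|\,|f(u')-f(u'')|\,\big|\le\beta|D|$ for all $x',x''\in D$.

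Finally I would pass from this pointwise estimate to the oscillation estimate. Writing $a=|f(x')-f(x'')|$ and $b=|f(u')-f(u'')|$, the bound $|a-|S(t)|\,b|\le\beta|D|$ holds for the matched pairs uniformly in $x',x''$. Taking the supremum over $x',x''\in D$ (equivalently $u',u''\in L_i^{-1}(D)$) in each one-sided inequality $a\le|S(t)|\,b+\beta|D|$ and $|S(t)|\,b\le a+\beta|D|$ yields $O(f,D)\le|S(t)|\,O(f,L_i^{-1}(D))+\beta|D|$ and $|S(t)|\,O(f,L_i^{-1}(D))\le O(f,D)+\beta|D|$, which combine to the claimed inequality.

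The point worth emphasizing is that $f$ is a fractal function, typically nowhere differentiable, so no Lipschitz control on $f$ itself is available; the only handle is the uniform bound $\|f\|_\infty$. What rescues the argument is that in every error term the potentially large factor coming from $f$ is multiplied against a difference of $S$ (or of $h$), each of which already contributes a factor $|D|$ by Lipschitzness, so the crude estimate $|f|\le\|f\|_\infty$ is enough. Consequently the resulting constant $\beta$ is uniform in $i$, $D$ and $t$, which is exactly what the statement requires.
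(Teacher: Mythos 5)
Your proposal is correct and follows essentially the same route as the paper: both apply the self-referential identity $f(x)=S(x)f(L_i^{-1}(x))+h(x)$, split $f(x')-f(x'')$ into the main term $|S(t)|\bigl(f(u')-f(u'')\bigr)$ plus error terms controlled by $\lambda_S\|f\|_\infty$ and the Lipschitz constant of $h$, and then pass to the two one-sided oscillation inequalities. The only difference is the algebraic grouping of the error terms (the paper centers both $S(x')$ and $S(x'')$ at $S(t)$, yielding the constant $2M_f\lambda_S+\max_\ell|N(y_\ell-y_{\ell-1})|$ instead of your $3\lambda_S\|f\|_\infty+\lambda_h$), which is immaterial.
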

\begin{proof}
Given $ D \subset I_i$ and $t\in D$,  from \eqref{eq:FIF-rec},
\begin{equation*}
	O(f,D)=\sup_{x',x'' \in D}\big|h(x')-h(x'')+S(x')f(L_i^{-1}(x'))-S(x'')f(L_i^{-1}(x''))\big|.
\end{equation*}

Write $M_f=\max_{x \in I}|f(x)|$. Notice that for any $x',x''\in D$,
\[
  |h(x')-h(x'')|=|x'-x''|\cdot|N(y_i-y_{i-1})|\leq |D|\cdot \max_{1\leq \ell\leq N} |N(y_\ell-y_{\ell-1})|,
\]
and
\begin{align*}
  &|S(x')f(L_i^{-1}(x'))-S(x'')f(L_i^{-1}(x''))| \\
  \leq & |S(x')-S(t)|\cdot |f(L_i^{-1}(x'))| + |S(x'')-S(t)|\cdot |f(L_i^{-1}(x''))| \\
        &+ |S(t)|\cdot |f(L_i^{-1}(x'))-f(L_i^{-1}(x''))| \\
    \leq & 2 M_f \lambda_S |D|+ |S(t)|\cdot O(f,L_i^{-1}(D)).
\end{align*}
Let $\beta=2 M_f \lambda_S + \max_{1\leq \ell\leq N} |N(y_\ell-y_{\ell-1})|$. Then $\beta\geq 0$ and
\[
  O(f,D) \leq |S(t)|\cdot O(f,L_i^{-1}(D))+\beta |D|.
\]
Similarly, it is easy to see that
\[
  O(f,D) \geq |S(t)|\cdot O(f,L_i^{-1}(D))-\beta|D|.
\]
Thus, the lemma holds.
\end{proof}

\begin{cor}\label{cor:1}
Let $\beta\geq 0$ be the constant in Lemma~\ref{lem:of-ofl}. Then for any $1\leq i \leq N$, $k \in \mathbb{Z}^+$, $1\leq j \leq N^k$ and any $t \in I_{i,j}^k$,
\begin{align*}
	\big|O(f,I_{i,j}^k)-|S(t)| O(f,I_j^k) \big| \leq \beta N^{-k-1}.
\end{align*}
\end{cor}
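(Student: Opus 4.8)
The plan is to deduce the corollary directly from Lemma~\ref{lem:of-ofl} by specializing the set $D$ to $I_{i,j}^k$. First I would check that this is a legitimate choice of $D$: by definition $I_{i,j}^k=(I_i)_j^k$ is the $j$-th subinterval obtained when $I_i$ is partitioned into $N^k$ equal pieces, so $I_{i,j}^k\subset I_i$, which is precisely the containment hypothesis required by Lemma~\ref{lem:of-ofl}. The point $t$ in the corollary is taken in $I_{i,j}^k$, matching the requirement $t\in D$ in the lemma.

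The one computation to carry out is the identification of $L_i^{-1}(I_{i,j}^k)$. Under the normalization $x_0=0$, $x_N=1$ adopted at the end of Section~2, the map $L_i$ is the affine bijection $L_i(x)=(i-1+x)/N$ from $I$ onto $I_i$, whose inverse is $L_i^{-1}(y)=Ny-(i-1)$. Applying this inverse to the two endpoints of
\[
  I_{i,j}^k=\Big[\tfrac{i-1}{N}+\tfrac{j-1}{N^{k+1}},\ \tfrac{i-1}{N}+\tfrac{j}{N^{k+1}}\Big]
\]
yields $\tfrac{j-1}{N^k}$ and $\tfrac{j}{N^k}$ respectively, so that $L_i^{-1}(I_{i,j}^k)=I_j^k$.

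Finally I would substitute $D=I_{i,j}^k$ and $L_i^{-1}(D)=I_j^k$ into the conclusion of Lemma~\ref{lem:of-ofl}, and use the fact that $|I_{i,j}^k|=N^{-k-1}$, to obtain
\[
  \big|O(f,I_{i,j}^k)-|S(t)|\,O(f,I_j^k)\big|\leq \beta\,N^{-k-1},
\]
which is exactly the claimed inequality. Since the statement is a direct specialization of the preceding lemma, there is no substantive obstacle to overcome; the only step requiring genuine care is the affine change-of-variables establishing $L_i^{-1}(I_{i,j}^k)=I_j^k$, which relies on the uniform-spacing assumption (A4) together with the normalization $x_0=0$, $x_N=1$. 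I would keep the absolute values on $S(t)$ throughout, exactly as in Lemma~\ref{lem:of-ofl}, so that the argument does not presuppose positivity of $S$.
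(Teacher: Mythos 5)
Your proposal is correct and is exactly the argument the paper intends: the corollary is stated without proof precisely because it is the specialization of Lemma~\ref{lem:of-ofl} to $D=I_{i,j}^k$, using $I_{i,j}^k\subset I_i$, the identity $L_i^{-1}(I_{i,j}^k)=I_j^k$, and $|I_{i,j}^k|=N^{-k-1}$. Your verification of the affine change of variables is accurate, so there is nothing to add.
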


\begin{cor}\label{cor:2}
Let $\beta\geq 0$ be the constant in Lemma~\ref{lem:of-ofl}. Then for any $1\leq i \leq N$, $k \in \mathbb{Z}^+$, $1\leq j \leq N^k$ and any $D \subset I_{i,j}^k$,
\begin{equation*}
	\underline{s}_{i,j}^{k} \cdot O(f,L_i^{-1} (D)) -\beta|D|
	\leq O(f,D) \leq \overline{s}_{i,j}^{k} \cdot O(f,L_i^{-1}{(D)}) + \beta|D|.
\end{equation*}
\end{cor}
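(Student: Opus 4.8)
The plan is to derive Corollary~\ref{cor:2} directly from Lemma~\ref{lem:of-ofl} by specializing its ``any $t\in D$'' freedom to the extremal points that define $\underline{s}_{i,j}^k$ and $\overline{s}_{i,j}^k$. Recall that $\underline{s}_{i,j}^k=\min_{x\in I_{i,j}^k}|S(x)|$ and $\overline{s}_{i,j}^k=\max_{x\in I_{i,j}^k}|S(x)|$, and that by compactness these minima and maxima are attained at some points of $I_{i,j}^k$. Since $D\subset I_{i,j}^k$, we have $|D|\le |I_{i,j}^k|=N^{-k-1}$, which will ultimately replace the $\beta|D|$ term if one wishes a uniform bound, though here the statement keeps $\beta|D|$ as is.

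For the upper bound, first I would invoke Lemma~\ref{lem:of-ofl} with $D\subset I_i$ (which holds since $D\subset I_{i,j}^k\subset I_i$), obtaining
\[
  O(f,D)\le |S(t)|\cdot O(f,L_i^{-1}(D))+\beta|D|
\]
for \emph{any} $t\in D$. Now I choose $t=t^*\in \overline{I_{i,j}^k}$ to be a point where $|S|$ attains its maximum over $I_{i,j}^k$; if this maximizer lies in $D$ we use it directly, otherwise I would instead argue that the inequality $O(f,D)\le |S(t)|\,O(f,L_i^{-1}(D))+\beta|D|$ combined with $|S(t)|\le \overline{s}_{i,j}^k$ for every $t\in D\subset I_{i,j}^k$ already yields $O(f,D)\le \overline{s}_{i,j}^k\cdot O(f,L_i^{-1}(D))+\beta|D|$, since $O(f,L_i^{-1}(D))\ge 0$. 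This last observation is cleaner: because the oscillation is nonnegative, replacing $|S(t)|$ by the larger quantity $\overline{s}_{i,j}^k$ only increases the right-hand side, so the bound holds for the whole set without hunting for where the maximum is attained.

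For the lower bound, symmetrically I would start from the reverse inequality in Lemma~\ref{lem:of-ofl},
\[
  O(f,D)\ge |S(t)|\cdot O(f,L_i^{-1}(D))-\beta|D|,
\]
valid for any $t\in D$, and then use that $|S(t)|\ge \underline{s}_{i,j}^k$ for every $t\in D\subset I_{i,j}^k$. Again, since $O(f,L_i^{-1}(D))\ge 0$, replacing $|S(t)|$ by the smaller $\underline{s}_{i,j}^k$ only decreases the right-hand side, giving $O(f,D)\ge \underline{s}_{i,j}^k\cdot O(f,L_i^{-1}(D))-\beta|D|$. Combining the two displays yields exactly the asserted sandwich.

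I do not expect any real obstacle here; the corollary is a routine specialization. The only point requiring a moment's care is the direction of the monotonicity argument: one must verify that $O(f,L_i^{-1}(D))\ge 0$ so that enlarging the scalar coefficient preserves the upper inequality and shrinking it preserves the lower inequality. This nonnegativity is immediate from the definition of oscillation as a supremum of absolute differences. Everything else is a direct transcription of Lemma~\ref{lem:of-ofl} with the bounds $\underline{s}_{i,j}^k\le |S(t)|\le \overline{s}_{i,j}^k$ on $I_{i,j}^k$.
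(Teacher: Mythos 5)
Your proof is correct and follows exactly the route the paper intends: the corollary is stated without proof precisely because it is the immediate specialization of Lemma~\ref{lem:of-ofl} obtained by picking any $t\in D\subset I_{i,j}^k$ and using $\underline{s}_{i,j}^k\le |S(t)|\le \overline{s}_{i,j}^k$ together with the nonnegativity of the oscillation. Your brief detour about locating the maximizer of $|S|$ is unnecessary, as you yourself note, but the final argument is the right one.
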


Given $k,q \in \mathbb{Z}^+$, we define
\begin{equation*}
	V(f,k,q)=\big(O_q(f,I_1^k),O_q(f,I_2^k),\cdots,O_q(f,I_{N^k}^k)\big)^T \in \mathbb{R}^{N^k},
\end{equation*}
and
\begin{equation*}
	V(f,k)=\big(O(f,I_1^k),O(f,I_2^k),\cdots,O(f,I_{N^k}^k)\big)^T \in \mathbb{R}^{N^k}.
\end{equation*}
For convenience, we write $V(f,k,0)=V(f,k)$.
It is obvious that for all $k\in \mathbb{Z}^+$ and $q\in \mathbb{N}$,
\begin{equation*}
	O_{k+q}(f,I)={||V(f,k+q)||}_1={||{V}(f,k,q)||}_1,
\end{equation*}
where $||v||_1:=\sum_{i=1}^n |v_i|$ for any $v=(v_1,\ldots,v_n)\in \bR^n$.


\begin{lem}\label{lem:vpq}
Assume that the function $S$ is not identically zero on every subinterval of $I$.  Then for every  $k \in \bZ^+$,
there exists a constant $c_k>0$, such that for all $q \in \bZ^+$,
\begin{equation*}
O_{k+q}(f,I) \leq
\begin{cases}
	c_k  (\rho (M_k) )^q, \quad	& \rho(M_k) >1,	\\
	c_k   q,  \quad &\rho(M_k) \leq 1.
\end{cases}
\end{equation*}
\end{lem}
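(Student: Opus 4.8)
The plan is to set up a one-step vector recursion relating $V(f,k,q+1)$ to $M_k\,V(f,k,q)$, iterate it, pass to the $\|\cdot\|_1$-norm to recover $O_{k+q}(f,I)$, and finally control the powers of $M_k$ through its spectral radius.

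\emph{Step 1 (the recursion).} I would first establish the componentwise inequality
\[
  V(f,k,q+1)\leq M_k\, V(f,k,q)+\beta N^{-k}\mathbf{1},
\]
where $\mathbf{1}=(1,\ldots,1)^T\in\bR^{N^k}$ and $\beta\geq 0$ is the constant from Lemma~\ref{lem:of-ofl}. To prove it, fix a row index $m=(i-1)N^{k-1}+\ell$ with $1\leq i\leq N$ and $1\leq \ell\leq N^{k-1}$, so that $I_m^k=(I_i)_\ell^{k-1}\subset I_i$ and $L_i^{-1}(I_m^k)=I_\ell^{k-1}$. Each level-$(q+1)$ subinterval $D$ of $I_m^k$ then satisfies $D\subset I_i$, and $L_i^{-1}(D)$ is a level-$q$ subinterval of some $I_j^k$ with $(\ell-1)N<j\leq \ell N$; since this forces $D\subset I_{i,j}^k$, every $t\in D$ obeys $|S(t)|\leq \overline{s}_{i,j}^k$. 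Applying Lemma~\ref{lem:of-ofl} to each such $D$ and summing the $N^{q+1}$ contributions, the error terms total $\beta\,N^{q+1}\cdot N^{-(k+q+1)}=\beta N^{-k}$, while the main terms regroup — using that the level-$(q+1)$ subdivision of $I_\ell^{k-1}$ is the union over $(\ell-1)N<j\leq \ell N$ of the level-$q$ subdivisions of $I_j^k$ — into exactly $\sum_{j=(\ell-1)N+1}^{\ell N}\overline{s}_{i,j}^k\,O_q(f,I_j^k)=(M_k\,V(f,k,q))_m$, as required by \eqref{eq:Mk-def}.

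\emph{Step 2 (iteration and matrix powers).} Since $M_k\geq 0$, iterating the recursion yields
\[
  V(f,k,q)\leq (M_k)^q\,V(f,k,0)+\beta N^{-k}\sum_{r=0}^{q-1}(M_k)^r\mathbf{1}.
\]
Taking $\|\cdot\|_1$ and recalling $O_{k+q}(f,I)=\|V(f,k,q)\|_1$, $\|V(f,k,0)\|_1=O_k(f,I)$ and $\|\mathbf{1}\|_1=N^k$, I get $O_{k+q}(f,I)\leq \|(M_k)^q\|\,O_k(f,I)+\beta\sum_{r=0}^{q-1}\|(M_k)^r\|$, once the powers are controlled. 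By Lemma~\ref{lem:primitive}, $M_k$ is primitive, so $\rho(M_k)^{-q}(M_k)^q$ converges to the (rank-one) Perron projection \cite{HorJoh90}; hence $C_k:=\sup_q \rho(M_k)^{-q}\|(M_k)^q\|<\infty$ and $\|(M_k)^q\|\leq C_k\,\rho(M_k)^q$ for all $q$. Writing $\rho=\rho(M_k)$, this gives $O_{k+q}(f,I)\leq C_k\,\rho^q\,O_k(f,I)+\beta C_k\sum_{r=0}^{q-1}\rho^r$.

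\emph{Step 3 (the two cases).} If $\rho>1$, then $\sum_{r=0}^{q-1}\rho^r\leq \rho^q/(\rho-1)$, so both terms are bounded by a constant multiple of $\rho^q$, which is the first case with $c_k=C_k O_k(f,I)+\beta C_k/(\rho-1)$. If $\rho\leq 1$, then $\rho^q\leq 1$ and $\sum_{r=0}^{q-1}\rho^r\leq q$, so (using $q\geq 1$) the right-hand side is at most $(C_kO_k(f,I)+\beta C_k)\,q$, giving the second case. The step I expect to be the main obstacle is Step~1: one must check that the self-similar decomposition of $O_{q+1}(f,I_m^k)$ regroups \emph{exactly} according to the block pattern of $M_k$, matching the index bookkeeping $m=(i-1)N^{k-1}+\ell$, the column range $(\ell-1)N<j\leq \ell N$, and the containment $D\subset I_{i,j}^k$ that licenses $|S(t)|\leq \overline{s}_{i,j}^k$. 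Once this is pinned down, the remaining estimates are routine consequences of nonnegativity and the Perron-Frobenius theory.
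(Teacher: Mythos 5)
Your proposal is correct and follows essentially the same route as the paper: the identical one-step recursion $V(f,k,q+1)\leq M_k V(f,k,q)+\beta N^{-k}\mathbf{1}$ (with the same index bookkeeping and the same $\beta N^{-k}$ error total), followed by iteration and a spectral-radius bound. The only cosmetic difference is in controlling the powers of $M_k$: the paper dominates $V(f,k,1)$ and the error vector by a positive Perron eigenvector $v$ of $M_k$ and uses $(M_k)^n v=\rho(M_k)^n v$, whereas you invoke the convergence of $\rho(M_k)^{-q}(M_k)^q$ to the Perron projection of the primitive matrix $M_k$ to get $\|(M_k)^q\|\leq C_k\rho(M_k)^q$; both rest on Lemma~\ref{lem:primitive} and Perron--Frobenius and yield the same conclusion.
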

\begin{proof}
Let $\beta\geq 0$ be the constant in Lemma~\ref{lem:of-ofl}.
Given $q \in \mathbb{Z}^+$, $1\leq i \leq N$ and $1\leq j \leq N^{k-1}$, from Corollary~\ref{cor:2},
\begin{align*}
  O_q(f,I_{i,j}^k)&=\sum_{m=1}^{N^q} O(f,(I_{i,j}^k)_m^q) \\
  &\leq \sum_{m=1}^{N^q}\Big( \overline{s}_{i,j}^k O(f,L_i^{-1}((I_{i,j}^k)_m^q)) +\beta |(I_{i,j}^k)_m^q| \Big)\\
  &=\overline{s}_{i,j}^k \sum_{m=1}^{N^q} O(f,(I_j^k)_m^q) + \beta |I_{i,j}^k|
  =\overline{s}_{i,j}^k O_q(f,I_j^k) + \beta N^{-k-1}.
\end{align*}
Thus, for $1\leq i\leq N$ and $1\leq \ell \leq N^{k-1}$, from 
\[
  I_{(i-1)N^{k-1}+\ell}^k = I_{i,\ell}^{k-1}=\bigcup_{j=(\ell-1)N+1}^{\ell N} I_{i,j}^k,
\]
we have
\begin{align*}
  O_{q+1}(f,I_{(i-1)N^{k-1}+\ell}^k)
  &=\sum_{j=(\ell-1)N+1}^{\ell N} O_q(f,I_{i,j}^k) \\
  &\leq \beta N^{-k}+\sum_{j=(\ell-1)N+1}^{\ell N} \overline{s}_{i,j}^k O_q(f,I_j^k). 
\end{align*}
Denote the vector $(\beta N^{-k}, \ldots, \beta N^{-k})^T$ in $\mathbb{R}^{N^k}$ by $u$. From the above inequality,
\begin{align*}
	V(f,k,q+1)\leq u +M_k V(f,k,q),
\end{align*}
which implies that
\begin{gather*}
	V(f,k,q) \leq u+M_k u+\cdots+(M_k)^{q-2}u+(M_k)^{q-1} V(f,k,1)
\end{gather*}
for all $q\in \bZ^+$.
By Lemma~\ref{th:PF}, we can choose a strictly positive eigenvector $v$ of $M_k$ such that $v\geq V(f,k,1)$ and $v\geq u$. Thus
\begin{align*}
	&(M_k)^{q-1}  V(f,k,1) \leq   (M_k)^{q-1} v = (\rho(M_k))^{q-1} v, \quad  \mbox{and} \\
	&(M_k)^n u \leq  (M_k)^n v = (\rho(M_k))^n v, \quad n\in \mathbb{N}.
\end{align*}
Hence
\begin{equation*}
  O_{k+q}(f,I)=||V(f,k,q)||_1\leq ||v||_1 \sum_{n=0}^{q-1} (\rho(M_k))^n.
\end{equation*}

Let $c_k=||v||_1$ if $\rho(M_k)\leq 1$, and $c_k=||v||_1/(\rho(M_k)-1)$ if $\rho(M_k)>1$. Then the lemma holds.
\end{proof}

\begin{theo}\label{th:upper}
Assume that the function $S$ is not identically zero on every subinterval of $I$.
Then
\begin{equation}\label{eq:upBoxFormula}
	\overline{\dim}_{B} \Gamma f  \leq  \max\Big\{1,1+\frac{\log  \rho^*   }{\log N}\Big\}.
\end{equation}

\end{theo}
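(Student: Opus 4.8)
The plan is to combine the oscillation estimate of Lemma~\ref{lem:vpq} with the box-dimension criterion of Lemma~\ref{lem:box-dim-new}, and then pass to the limit in $k$ using Theorem~\ref{th:rho-M}. The key observation is that Lemma~\ref{lem:vpq} already controls $O_{k+q}(f,I)$ in terms of $\rho(M_k)$ for each \emph{fixed} $k$; feeding this into the upper estimate \eqref{eq:upper-boxdim-est-new} will produce a bound involving $\rho(M_k)$, and since this holds for every $k$ we may optimize by sending $k\to\infty$, where $\rho(M_k)\to\rho^*$ by Theorem~\ref{th:rho-M}.

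More concretely, fix $k\in\bZ^+$. I would analyze two cases. If $\rho(M_k)>1$, then Lemma~\ref{lem:vpq} gives $O_{k+q}(f,I)\leq c_k(\rho(M_k))^q$, so
\[
  \varlimsup_{q\to\infty}\frac{\log\big(O_{k+q}(f,I)+1\big)}{(k+q)\log N}
  \leq \varlimsup_{q\to\infty}\frac{\log\big(c_k(\rho(M_k))^q+1\big)}{(k+q)\log N}
  = \frac{\log\rho(M_k)}{\log N},
\]
since the $q\log\rho(M_k)$ term dominates and $(k+q)/q\to 1$. Because the limit superior in \eqref{eq:upper-boxdim-est-new} is taken over all indices and replacing the running index $k$ there by $k+q$ (with $k$ fixed, $q\to\infty$) captures the same tail, Lemma~\ref{lem:box-dim-new} yields
\[
  \upBdim\Gamma f \leq 1+\frac{\log\rho(M_k)}{\log N}.
\]
If instead $\rho(M_k)\leq 1$, then $O_{k+q}(f,I)\leq c_k q$ grows only linearly, so the corresponding limit superior is $0$ and we get $\upBdim\Gamma f\leq 1$. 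In either case, uniformly in $k$,
\[
  \upBdim\Gamma f \leq \max\Big\{1,\,1+\frac{\log\rho(M_k)}{\log N}\Big\}.
\]

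Finally I would let $k\to\infty$. Since $\rho(M_k)\to\rho^*$ by Theorem~\ref{th:rho-M} and the map $x\mapsto\max\{1,1+\log x/\log N\}$ is continuous, the right-hand side converges to $\max\{1,1+\log\rho^*/\log N\}$, giving \eqref{eq:upBoxFormula}. One must only check that $\rho^*>0$ so that $\log\rho^*$ is defined, which follows because each $\rho(M_k)>0$ by the Perron--Frobenius Theorem (Lemma~\ref{th:PF}) applied to the primitive matrix $M_k$ of Lemma~\ref{lem:primitive}, and the limit of a decreasing sequence of positive reals that is bounded below by $\rho(M'_1)>0$ stays positive.

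The main subtlety I expect is the bookkeeping in the first displayed computation: the bound from Lemma~\ref{lem:vpq} is stated with a \emph{fixed} $k$ and varying $q$, whereas \eqref{eq:upper-boxdim-est-new} is phrased as a single limit superior over one index. The clean way to reconcile them is to note that for fixed $k$ the sequence $\{O_{m}(f,I)\}_{m\geq k}$ is exactly $\{O_{k+q}(f,I)\}_{q\geq 0}$, so the tail behavior of the index-$m$ limit superior is governed by the $q\to\infty$ estimate; the finitely many initial terms and the additive constant $k$ in the denominator do not affect the limit superior. Getting this indexing transparent, and confirming that the per-$k$ bound really does hold for \emph{all} $k$ so that the final passage to the infimum over $k$ (equivalently the limit $k\to\infty$, since $\rho(M_k)$ is monotone) is legitimate, is the only place where care is needed.
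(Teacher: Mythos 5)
Your proposal is correct and follows essentially the same route as the paper: feed the bound of Lemma~\ref{lem:vpq} into \eqref{eq:upper-boxdim-est-new} for each fixed $k$, split according to whether $\rho(M_k)>1$, and let $k\to\infty$ via Theorem~\ref{th:rho-M}. One tiny caveat: your justification that $\rho^*>0$ via the lower bound $\rho(M'_1)$ uses positivity of $S$ (condition (A6)), which is not assumed here, but this is immaterial since in the only case where $\log\rho^*$ actually enters the bound one has $\rho(M_k)>1$ for all $k$ and hence $\rho^*\geq 1$.
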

\begin{proof}

From Lemma~\ref{lem:box-dim-new}, for every fixed $k\in \bZ^+$,
\begin{equation}\label{eq:th-upper-1}
    \overline{\dim}_B \Gamma f \leq 1+\varlimsup_{q\to\infty} \frac{\log (O_{k+q}(f,I)+1)}{q\log N}.
\end{equation}

Assume that $\rho(M_k)\leq 1$ for some $k\in \bZ^+$. From Lemma~\ref{lem:vpq},  there exists a constant $c_k>0$, such that
\begin{equation*}
	O_{k+q}(f,I) \leq c_k q
\end{equation*}
for all $q \in \bZ^+$. Combining this with \eqref{eq:th-upper-1}, we have $\upBdim \Gamma f\leq 1$.  Thus \eqref{eq:upBoxFormula} holds.

Assume that $\rho(M_k)> 1$ for all $k\in \bZ^+$. From Lemma~\ref{lem:vpq}, for each $k\in \bZ^+$, there exists a constant $c_k>0$, such that
\begin{equation*}
	O_{k+q}(f,I) \leq c_k  {\rho (M_k) }^q
\end{equation*}
for all $q \in \bZ^+$. Combining this with \eqref{eq:th-upper-1}, we have $\upBdim \Gamma f \leq 1+\frac{\log \rho(M_k)}{\log N}$. By the arbitrariness of $k$, we know from Theorem~\ref{th:rho-M} that
\[
  \upBdim \Gamma f\leq 1+\lim_{k\to \infty} \frac{\log \rho(M_k)}{\log N}=1+\frac{\log \rho^*}{\log N}.
\]
Thus \eqref{eq:upBoxFormula} also holds.
\end{proof}

\subsection{Lower box dimension estimate}
In this subsection, we will estimate the lower bound of $O_k(f,I)$ and the lower box dimension of $\Gamma f$.


\begin{lem}\label{lem:limit_o}
Assume that the function $S$ is positive.
If $\lim_{p\to \infty} O_p(f,I)=\infty$, then for any $k \in \bZ^+$ and any positive vector $v \in \mathbb{R}^{N^k}$, there exists $p\in  \bZ^+$ satisfying
\begin{align}
	 V(f,k,p) \geq v.
\end{align}
\end{lem}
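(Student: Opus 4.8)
The plan is to run the recursion of Lemma~\ref{lem:vpq} in the lower direction, using the left-hand inequality of Corollary~\ref{cor:2} in place of the right-hand one, and then to convert the scalar hypothesis $O_p(f,I)\to\infty$ into simultaneous growth of every coordinate of $V(f,k,p)$ by invoking the primitivity of $M'_k$.

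First I would set up a lower-bound recursion. Fix $k\in\bZ^+$ and let $\beta\ge 0$ be the constant from Lemma~\ref{lem:of-ofl}. For $1\le i\le N$ and $1\le j\le N^k$, applying the lower estimate of Corollary~\ref{cor:2} to $D=(I_{i,j}^k)_m^q$ (so that $L_i^{-1}(D)=(I_j^k)_m^q$) and summing over $1\le m\le N^q$ gives
\[
O_q(f,I_{i,j}^k)\ \ge\ \underline{s}_{i,j}^k\,O_q(f,I_j^k)-\beta N^{-k-1}.
\]
Summing this over $j\in\{(\ell-1)N+1,\dots,\ell N\}$ and using $I_{(i-1)N^{k-1}+\ell}^k=\bigcup_{j=(\ell-1)N+1}^{\ell N} I_{i,j}^k$ exactly as in the proof of Lemma~\ref{lem:vpq}, I obtain, with $u'=(\beta N^{-k},\dots,\beta N^{-k})^T\in\bR^{N^k}$,
\[
V(f,k,q+1)\ \ge\ M'_k\,V(f,k,q)-u'\qquad\text{for all }q\in\bZ^+.
\]

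Next I would iterate and spread. Since $S$ is positive, $\underline{s}_{i,j}^k>0$ for all $i,j$, so the argument of Lemma~\ref{lem:primitive} applies verbatim to $M'_k$ and yields $(M'_k)^k>0$; put $\delta=\min_{a,b}\big((M'_k)^k\big)_{ab}>0$. Because $M'_k\ge 0$ and $V(f,k,q)\ge 0$, multiplying the recursion by $M'_k$ preserves the inequality, so iterating it $k$ times gives
\[
V(f,k,q+k)\ \ge\ (M'_k)^k\,V(f,k,q)-w,\qquad w:=\sum_{t=0}^{k-1}(M'_k)^t u',
\]
where $w$ is a fixed vector independent of $q$. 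Reading off the $a$-th coordinate and using $V(f,k,q)\ge 0$ together with $\big((M'_k)^k\big)_{ab}\ge\delta$,
\[
O_{q+k}(f,I_a^k)\ \ge\ \delta\sum_{b=1}^{N^k}O_q(f,I_b^k)-w_a\ =\ \delta\,O_{k+q}(f,I)-w_a,
\]
the last equality being $\|V(f,k,q)\|_1=O_{k+q}(f,I)$.

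To finish, I would let $q\to\infty$. By hypothesis $O_{k+q}(f,I)\to\infty$, while $w$ does not depend on $q$; hence the right-hand side above tends to $+\infty$ for each of the finitely many indices $a$. Therefore I may choose $q$ so large that $\delta\,O_{k+q}(f,I)-w_a\ge v_a$ for every $a$ at once, and then $p=q+k$ satisfies $V(f,k,p)\ge v$. The main obstacle is the additive error produced at each step of the recursion: a naive iteration accumulates a $q$-dependent error that could compete with the growth. The point is that these errors telescope into the single fixed vector $w$, while the mechanism that turns one scalar hypothesis ($O_{k+q}(f,I)\to\infty$) into control of \emph{all} coordinates is exactly the strict positivity $(M'_k)^k>0$, which lets each output coordinate be bounded below by $\delta$ times the whole $\ell^1$-norm of the input.
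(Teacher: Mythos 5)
Your proposal is correct and takes essentially the same route as the paper's own proof: the same lower recursion $V(f,k,q+1)\geq M'_k V(f,k,q)-u$, iteration $k$ times to get $V(f,k,q+k)\geq (M'_k)^k V(f,k,q)-w$ with a $q$-independent error $w$, and the use of $(M'_k)^k>0$ to turn the divergence of $\|V(f,k,q)\|_1=O_{k+q}(f,I)$ into simultaneous growth of all coordinates. Your direct coordinate bound $\big((M'_k)^k V\big)_a\geq \delta\,\|V\|_1$ is just a slightly cleaner form of the paper's step via the maximal entry of $V(f,k,q)$, so there is no substantive difference.
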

\begin{proof}
Let $\beta\geq 0$ be the constant defined in Lemma~\ref{lem:of-ofl}. Fix $k\in \bZ^+$.
Given $q \in \bZ^+$, $1\leq i \leq N$ and $1\leq \ell \leq N^{k-1}$, using similar arguments in Lemma~\ref{lem:vpq}, we have
\begin{align*}
  O_{q+1}(f,I_{(i-1)N^{k-1}+\ell}^k)
  \geq \sum_{j=(\ell-1)N+1}^{\ell N} \overline{s}_{i,j}^k O_q(f,I_j^k)-\beta N^{-k}
\end{align*}
so that
\begin{equation}\label{eq:lower-connect}
	V(f,k,q+1)\geq M'_k V(f,k,q) - u,
\end{equation}
where $u=(\beta N^{-k}, \ldots, \beta N^{-k})^T \in \mathbb{R}^{N^k}$. By induction,
\begin{equation}\label{eq:lem:limit_o-1}
	V(f,k,q+k) \geq (M'_k)^k V(f,k,q)-\sum_{\ell=0}^{k-1} (M'_k)^\ell u.
\end{equation}
Let $\alpha_k$ be the minimal entry of the matrix $(M'_k)^k$. Then $\alpha_k>0$ since $(M'_k)^k>0$. Notice that the maximal element of $V(f,k,q)$ is at least $N^{-k}||V(f,k,q)||_1$. Thus
\begin{equation}\label{eq:lem:limit_o-2}
  (M'_k)^k V(f,k,q)\geq ||V(f,k,q)||_1 w_k,
\end{equation}
where $w_k=(\alpha_kN^{-k},\ldots,\alpha_kN^{-k})\in \bR^{N^k}$.



On the other hand, it follows from $\lim_{p\to \infty} O_p(f,I)=\infty$ that
\begin{equation*}
 \lim_{q\to \infty}{||V(f,k,q)||}_1=\lim_{q\to \infty}O_{k+q}(f,I)=\infty.
\end{equation*}
Hence, we can choose $q_0$ large enough, such that
\[
  ||V(f,k,q_0)||_1w_k\geq v+\sum_{\ell=0}^{k-1}(M'_k)^\ell u.
\]
Combining this with \eqref{eq:lem:limit_o-1} and \eqref{eq:lem:limit_o-2}, we have $V(f,k,p) \geq v$, where $p=q_0+k$.
\end{proof}

\begin{theo}\label{th:lower}

Assume that the function $S$ is positive and $\lim_{p\to \infty} O_p(f,I)=\infty$. Then
\begin{align}\label{eq:thm-lowBdim}
	\lowBdim \Gamma f
	\geq 1+\frac{\log \rho_S}{\log N} .
\end{align}


\end{theo}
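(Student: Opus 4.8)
The plan is to combine the oscillation lower bound of Lemma~\ref{lem:box-dim-new} with the matrix recursion already recorded in the proof of Lemma~\ref{lem:limit_o}. By Lemma~\ref{lem:box-dim-new} it suffices to show
\[
  \varliminf_{n\to\infty}\frac{\log\big(O_n(f,I)+1\big)}{n\log N}\geq \frac{\log\rho_S}{\log N}.
\]
Since $\rho(M'_k)$ increases to $\rho_*=\rho_S$ by Theorem~\ref{th:rho-m} and Proposition~\ref{prop:rho-rho}, I would fix $k\in\bZp$ and prove the intermediate estimate $\lowBdim\Gamma f\geq 1+\log\rho(M'_k)/\log N$, then let $k\to\infty$. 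When $\rho(M'_k)\leq 1$ this intermediate bound is at most $1$ and holds trivially, since $\lowBdim\Gamma f\geq 1$ for the graph of a continuous function (Subsection~2.1); so the genuine work is the case $\rho(M'_k)>1$.

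Fix such a $k$, write $\lambda=\rho(M'_k)>1$, and recall from \eqref{eq:lower-connect} that
\[
  V(f,k,q+1)\geq M'_k\,V(f,k,q)-u,\qquad u=(\beta N^{-k},\dots,\beta N^{-k})^T.
\]
Since $S$ is positive, $(M'_k)^k>0$ by the argument of Lemma~\ref{lem:primitive}, so $M'_k$ is primitive and the Perron--Frobenius Theorem (Lemma~\ref{th:PF}) supplies a strictly positive right eigenvector $w$ with $M'_k w=\lambda w$. Setting $\delta=\beta N^{-k}/\min_i w_i$ one has $u\leq\delta w$ componentwise, and because $M'_k$ is nonnegative this gives the implication
\[
  V(f,k,q)\geq t\,w\ \Longrightarrow\ V(f,k,q+1)\geq M'_k(t w)-\delta w=(\lambda t-\delta)w.
\]
This converts the vector recursion into the scalar recursion $t_{q+1}=\lambda t_q-\delta$, whose unique fixed point is $t^*=\delta/(\lambda-1)$; writing $s_q=t_q-t^*$ gives $s_{q+1}=\lambda s_q$, so any seed with $t_q>t^*$ grows geometrically at rate $\lambda$.

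To produce such a seed I would invoke Lemma~\ref{lem:limit_o} with the positive vector $v=(t^*+1)w$, obtaining $p_0\in\bZp$ with $V(f,k,p_0)\geq(t^*+1)w$. Iterating the implication $m$ times yields $V(f,k,p_0+m)\geq(t^*+\lambda^m)w\geq\lambda^m w$, hence
\[
  O_{k+p_0+m}(f,I)=\|V(f,k,p_0+m)\|_1\geq \lambda^m\,\|w\|_1.
\]
As $m$ ranges over $\{0,1,2,\dots\}$ the index $n=k+p_0+m$ runs through all integers $\geq k+p_0$, so $O_n(f,I)\geq\|w\|_1\,\lambda^{\,n-k-p_0}$ for every large $n$; taking logarithms and dividing by $n\log N$ gives $\varliminf_n\log(O_n(f,I)+1)/(n\log N)\geq\log\lambda/\log N$. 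Feeding this into Lemma~\ref{lem:box-dim-new} yields $\lowBdim\Gamma f\geq 1+\log\rho(M'_k)/\log N$, and letting $k\to\infty$ produces \eqref{eq:thm-lowBdim}. The one delicate point, which I expect to be the main obstacle, is the additive error term $u$: the argument succeeds only because $u$ can be dominated by a small multiple of the eigenvector $w$, so that once the oscillations are large enough (guaranteed by the hypothesis $O_p(f,I)\to\infty$ via Lemma~\ref{lem:limit_o}) the geometric factor $\lambda$ overwhelms the additive loss and the clean rate $\lambda^m$ survives.
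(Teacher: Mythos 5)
Your proposal is correct and follows essentially the same route as the paper: both arguments rest on Lemma~\ref{lem:box-dim-new}, the recursion \eqref{eq:lower-connect}, domination of the error vector $u$ by a positive Perron eigenvector of $M'_k$, a sufficiently large seed supplied by Lemma~\ref{lem:limit_o}, and finally letting $k\to\infty$ via Theorem~\ref{th:rho-m} and Proposition~\ref{prop:rho-rho}. The only difference is cosmetic: you extract the exact geometric rate $\rho(M'_k)^m$ by subtracting the fixed point of the affine scalar recursion, whereas the paper runs the induction at an auxiliary rate $\tau<\rho(M'_k)$ and then lets $\tau\to\rho(M'_k)$.
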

\begin{proof}
Fix $k\in \bZ^+$.
Let $\beta \geq 0$ be the constant defined in Lemma~\ref{lem:of-ofl} and
\[
  u=(\beta N^{-k}, \ldots, \beta N^{-k})^T \in \mathbb{R}^{N^k}.
\]


Given $0<\tau<\rho(M'_k)$, from Perron-Frobenius Theorem, we can choose a positive eigenvector $v$ of $M'_k$ associated with the eigenvalue $\rho(M'_k)$, such that
$
	v \geq \frac{1}{\rho(M'_k)-\tau} u.
$
Since $\lim_{p \to \infty}{O_p(f,I)}=\infty$, from Lemma~~\ref{lem:limit_o}, there exists $q\in \bZ^+$, such that
\begin{equation*}
	V(f,k,q) \geq v \geq \frac{1}{\rho(M'_k)-\tau} u.
\end{equation*}
Hence from \eqref{eq:lower-connect},
\[
  V(f,k,q+1)\geq\rho(M'_k)v-u\geq \rho(M'_k)v-(\rho(M'_k)-\tau)v=\tau v.
\]
Notice that all $n\in \bZ^+$,
\begin{align*}
  \rho(M'_k)\tau^n v-u &= \rho(M'_k)\big(\tau^n-1\big)v+\rho(M'_k)v-u \\
  &\geq \tau\big(\tau^{n}-1\big)v+\tau v=\tau^{n+1} v.
\end{align*}
Thus, by induction, for all $n\in \bZ^+$,
\begin{equation*}
	V(f,k,q+n) \geq \tau^{n}  v.
\end{equation*}
Hence
$
	O_{k+q+n}(f,I) = {||V(f,k,q+n)||}_1 \geq \tau^{n} {||  v ||}_1,
$ which implies that
\begin{align*}
	\varliminf_{n \to \infty}{\frac{\log \big(O_{n}(f,I)+1\big)}{n \log N}}
	&=	\varliminf_{n \to \infty}{\frac{\log \big(O_{k+q+n}(f,I) +1\big)}{(k+q+n) \log N}} \\
	&\geq \varliminf_{n \to \infty}{\frac{\log \big(\tau^n {||  v ||}_1 +1 \big)}{n \log N}}
	\geq \frac{\log \tau}{\log N} .
\end{align*}


From the arbitrariness of $\tau$,
\begin{equation*}
	\varliminf_{n \to \infty}{\frac{\log \big(O_{n}(f,I)+1\big)}{n \log N}}
	\geq  \frac{\log \big( \rho(M'_k) \big)}{\log N} .
\end{equation*}
By letting $k\to \infty$, we know from Theorem~\ref{th:rho-m} and Proposition~\ref{prop:rho-rho} that
\begin{equation*}
	\varliminf_{n \to \infty}{\frac{\log \big(O_{n}(f,I)+1\big)}{n \log N}}
	\geq  \frac{\log \rho_S} {\log N}.
\end{equation*}
Combining this with Lemma~\ref{lem:box-dim-new}, \eqref{eq:thm-lowBdim} holds.
\end{proof}

\subsection{The box dimension formula}

Notice that $\lowBdim \Gamma g \geq 1$ for every continuous function $g$ on $I$. Thus, from Theorems~\ref{th:upper} and \ref{th:lower}, the following result holds.
\begin{theo}\label{thm-bdim}
Assume that the function $S$ is positive on $I$. Then in the case that $\lim_{p\to \infty} O_p(f,I)=\infty$ and $\rho_S>1$,
\begin{equation}\label{eq:thm-Bdim}
	 \Bdim \Gamma f=1+\frac{\log  \rho_S   }{\log N}.
\end{equation}
Otherwise, $\Bdim  \Gamma f=1$.
\end{theo}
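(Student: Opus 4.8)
The plan is to assemble the box dimension formula directly from the upper bound of Theorem~\ref{th:upper} and the lower bound of Theorem~\ref{th:lower}, using Proposition~\ref{prop:rho-rho} to collapse $\rho^*$ and $\rho_*$ to the single quantity $\rho_S$, together with the universal estimate $\lowBdim \Gamma g\geq 1$ valid for any continuous $g$ on $I$. Since $S$ is assumed positive on $I$, it is in particular not identically zero on any subinterval of $I$, so the hypotheses of Theorem~\ref{th:upper} and (where required) Theorem~\ref{th:lower} are both at our disposal. Moreover, recalling that $\{O_p(f,I)\}_p$ is increasing, its limit exists in $[0,\infty]$, so the dichotomy ``$\lim_{p\to\infty}O_p(f,I)=\infty$'' versus its negation is exhaustive.

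First I would treat the distinguished case $\lim_{p\to\infty}O_p(f,I)=\infty$ and $\rho_S>1$. The lower bound is immediate from Theorem~\ref{th:lower}, which gives $\lowBdim \Gamma f\geq 1+\log\rho_S/\log N$. For the matching upper bound, Theorem~\ref{th:upper} yields $\upBdim \Gamma f\leq \max\{1,\,1+\log\rho^*/\log N\}$; since $\rho^*=\rho_S>1$ by Proposition~\ref{prop:rho-rho}, this maximum equals $1+\log\rho_S/\log N$. The two bounds coincide, which establishes \eqref{eq:thm-Bdim}.

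Next I would handle the complementary (``otherwise'') case, in which $\Bdim \Gamma f=1$ is claimed. Here the bound $\lowBdim \Gamma f\geq 1$ is automatic, so only $\upBdim \Gamma f\leq 1$ must be shown, and I would split according to which hypothesis fails. If $\lim_{p\to\infty}O_p(f,I)<\infty$, then $\{O_k(f,I)\}$ is bounded, hence $\varlimsup_{k\to\infty}\log(O_k(f,I)+1)/(k\log N)=0$, and Lemma~\ref{lem:box-dim-new} forces $\upBdim \Gamma f\leq 1$. If instead $\rho_S\leq 1$, then $\rho^*=\rho_S\leq 1$, so $1+\log\rho^*/\log N\leq 1$ and the maximum in Theorem~\ref{th:upper} equals $1$, again giving $\upBdim \Gamma f\leq 1$. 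In either situation $\upBdim \Gamma f\leq 1\leq \lowBdim \Gamma f$, whence $\Bdim \Gamma f=1$.

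The argument requires no genuinely new work, since the analytic substance has already been carried out in Theorems~\ref{th:upper} and~\ref{th:lower} and in the matrix-theoretic results of Section~3. The only point needing a little care is the bookkeeping of the ``otherwise'' case, which is really a disjunction of two sub-cases ($\lim_p O_p(f,I)$ finite, or $\rho_S\leq 1$) that must each be shown to produce upper box dimension $1$; once Proposition~\ref{prop:rho-rho} identifies $\rho^*=\rho_*=\rho_S$, both sub-cases reduce to the same conclusion via Theorem~\ref{th:upper} or Lemma~\ref{lem:box-dim-new}, and there is no residual obstacle.
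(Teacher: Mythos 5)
Your proposal is correct and follows essentially the same route as the paper's own proof: invoke Proposition~\ref{prop:rho-rho} to identify $\rho^*=\rho_*=\rho_S$, apply Theorems~\ref{th:upper} and~\ref{th:lower} in the main case, and in the ``otherwise'' case split into the two sub-cases ($\lim_p O_p(f,I)<\infty$ handled via Lemma~\ref{lem:box-dim-new}, and $\rho_S\leq 1$ handled via Theorem~\ref{th:upper}), combined with $\lowBdim\Gamma f\geq 1$. No substantive differences.
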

\begin{proof}
Since $S$ is positive on $I$, we know from Proposition~\ref{prop:rho-rho} that $\rho^*=\rho_*=\rho_S$.

Notice that the sequence $\{O_p(f,I)\}_{p=1}^\infty$ is increasing with respect to $p$. Thus the limit $\lim_{p\to \infty} O_p(f,I)$ exists.


In the case that $\lim_{p\to \infty} O_p(f,I)<\infty$, we know from Lemma~\ref{lem:box-dim-new} that $\upBdim \Gamma f\leq 1$.
In the case that $\rho_S\leq 1$, we know from Theorem~\ref{th:upper} that $\upBdim \Gamma f\leq 1$. Since $\lowBdim \Gamma f \geq 1$ always holds, $\Bdim  \Gamma f=1$ if $\lim_{p\to \infty} O_p(f,I)<\infty$ or $\rho_S\leq 1$.

In the case that $\lim_{p\to \infty} O_p(f,I)=\infty$ and $\rho_S>1$, we know from Theorems~\ref{th:upper} and \ref{th:lower} that \eqref{eq:thm-Bdim} holds.
\end{proof}


The following theorem gives us an easy-checking sufficient condition such that \eqref{eq:thm-Bdim} holds.	
\begin{theo}\label{thm:sufficient}

Assume that $\gamma_*>1$ and the function $S$ is positive on $I$. Furthermore, assume that there exists $k_0 \in \mathbb{Z}^+$ satisfying $O_{k_0}(f,I) >  \lambda_S M_f  / (\gamma_*-1)$, where $M_f =\max_{x \in I} |f(x)|$. Then \eqref{eq:thm-Bdim} holds.
\end{theo}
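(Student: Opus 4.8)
The plan is to deduce everything from Theorem~\ref{thm-bdim}: it suffices to check its two hypotheses, namely $\rho_S>1$ and $\lim_{p\to\infty}O_p(f,I)=\infty$. The first is immediate. Since $S$ is positive on $I$, the discussion following Lemma~\ref{lem:lem01} gives $\gamma_*\le\rho_S\le\gamma^*$, so the assumption $\gamma_*>1$ forces $\rho_S\ge\gamma_*>1$. Thus the entire content of the proof is to show that the increasing sequence $O_p(f,I)$ diverges, and for this I would first establish the one-step recursion
\begin{equation*}
  O_{p+1}(f,I)\ge \gamma_*\, O_p(f,I)-\lambda_S M_f,\qquad p\in\bZp.
\end{equation*}

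To derive this recursion, fix $p$ and $1\le j\le N^p$, and recall that $I_{i,j}^p=L_i(I_j^p)$. Choose $x',x''\in I_j^p$ attaining $O(f,I_j^p)$ with $f(x')\ge f(x'')$; then $L_i(x'),L_i(x'')\in I_{i,j}^p$, so $O(f,I_{i,j}^p)\ge f(L_i(x'))-f(L_i(x''))$ for each $i$. Summing over $1\le i\le N$ and substituting $f(L_i(\xi))=S(L_i(\xi))f(\xi)+h(L_i(\xi))$ from \eqref{eq:FIF-rec}, I would write the resulting lower bound as $\gamma(x')f(x')-\gamma(x'')f(x'')+\sum_{i=1}^N\big(h(L_i(x'))-h(L_i(x''))\big)$, using $S>0$ so that $\sum_i S(L_i(\xi))=\gamma(\xi)$. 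The key point is that $h(L_i(\xi))$ is the linear interpolant $(1-\xi)y_{i-1}+\xi y_i$, whence $\sum_i\big(h(L_i(x'))-h(L_i(x''))\big)=(x'-x'')(y_N-y_0)=0$ because $y_0=y_N=0$. Writing $\gamma(x')f(x')-\gamma(x'')f(x'')=\gamma(x'')\big(f(x')-f(x'')\big)+\big(\gamma(x')-\gamma(x'')\big)f(x')$ and using $\gamma(x'')\ge\gamma_*$, $f(x')-f(x'')=O(f,I_j^p)$, $|\gamma(x')-\gamma(x'')|\le\lambda_S|x'-x''|\le\lambda_S N^{-p}$, and $|f(x')|\le M_f$, I obtain $\sum_{i=1}^N O(f,I_{i,j}^p)\ge\gamma_* O(f,I_j^p)-\lambda_S M_f N^{-p}$. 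Summing over the $N^p$ values of $j$, and using that the $I_{i,j}^p$ are precisely the $N^{p+1}$ level-$(p+1)$ subintervals of $I$, yields the recursion.

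With the recursion in hand, the divergence is routine. The affine map $t\mapsto\gamma_* t-\lambda_S M_f$ has the fixed point $t^*=\lambda_S M_f/(\gamma_*-1)$, which is repelling because $\gamma_*>1$; subtracting $t^*$ from both sides of the recursion gives $O_{p+1}(f,I)-t^*\ge\gamma_*\big(O_p(f,I)-t^*\big)$. The hypothesis $O_{k_0}(f,I)>t^*$ then propagates by induction to $O_{k_0+n}(f,I)-t^*\ge\gamma_*^{n}\big(O_{k_0}(f,I)-t^*\big)\to\infty$, so $\lim_{p\to\infty}O_p(f,I)=\infty$. Both hypotheses of Theorem~\ref{thm-bdim} are then verified, and \eqref{eq:thm-Bdim} follows.

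The step I expect to be delicate is obtaining the clean constant $\lambda_S M_f$ in the recursion. A naive term-by-term application of Lemma~\ref{lem:of-ofl} or Corollary~\ref{cor:2} would produce a larger constant involving a factor $2$ and the slopes $N(y_i-y_{i-1})$ of $h$, which would not match the stated threshold. The sharp bound relies on two observations used together: bounding $O(f,I_{i,j}^p)$ below by the \emph{signed} difference $f(L_i(x'))-f(L_i(x''))$ rather than its absolute value, and summing over $i$ so that the contributions of $h$ telescope to zero via $y_0=y_N=0$. Matching the exact constant in the hypothesis depends on carrying out this cancellation precisely.
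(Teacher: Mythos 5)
Your proposal is correct and follows essentially the same route as the paper: the same one-step recursion $O_{p+1}(f,I)\ge\gamma_*O_p(f,I)-\lambda_S M_f$ obtained by summing the self-affine functional equation over $i$ so that the $h$-contributions cancel (via $y_0=y_N=0$), the same affine fixed-point argument giving divergence, and the same reduction to Theorem~\ref{thm-bdim} via $\rho_S\ge\gamma_*>1$. The "delicate" cancellation you flag is exactly the computation the paper performs.
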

\begin{proof}Since the function $S$ is positive, $\gamma(x)=\sum_{i=1}^N S(L_i(x))$. It is easy to see that for any $x',x''\in I$,
\[
  |\gamma(x')-\gamma(x'')|\leq  \lambda_S \sum_{i=1}^N |L_i(x')-L_i(x'')|=\lambda_S |x'-x''|.
\]

From \eqref{eq:2-1} and noticing that $b(x)\equiv 0$ on $I$,
\begin{align*}
	\sum_{i=1}^N f(L_i(x))
	&=\sum_{i=1}^N  \big(S(L_i(x))f(x)+h(L_i(x)\big)		\\
	&=\gamma(x)f(x)+\sum_{i=1}^N \big((1-x)y_{i-1}+x y_i\big)		=\gamma(x)f(x)+\sum_{i=1}^N  y_i.		
\end{align*}
Hence
\begin{align*}
	\sum_{i=1}^N O(f,I_{i,j}^k)
	&=\sum_{i=1}^N \max_{x',x'' \in I_j^k} \big( f(L_i(x'))-f(L_i(x''))	 \big)	\\	
	&\geq \max_{x',x'' \in I_j^k} \big( \gamma(x')f(x')-\gamma(x'')f(x'')	\big)\\
	&=\max_{x',x'' \in I_j^k} \big( \gamma(x')(f(x')-f(x''))+(\gamma(x')-\gamma(x''))f(x'')	\big)\\
	&\geq \gamma_*  O(f,I_j^k) - \lambda_S  N^{-k}  M_f.
\end{align*}
Thus
\begin{align*}
	O_{k+1}(f,I)
	\geq \sum_{j=1}^{N^k} \big( \gamma_* O(f,I_j^k)-\lambda_S N^{-k} M_f \big)
	= \gamma_* O_k(f,I) - \lambda_S  M_f,
\end{align*}
which implies that
\[
  O_{k+1}(f,I)-c\geq \gamma_* (O_k(f,I)-c), \qquad k\in \bZ^+,
\]
where  $c=\gl_S M_f/(\gamma_*-1)$. Thus from $O_{k_0}(f,I)>c$ and $\gamma_*>1$, we have $\lim_{k\to \infty}O_k(f,I)=\infty$. Since  $\rho_S \geq \gamma_*>1$,
we know from Theorem~\ref{thm-bdim} that \eqref{eq:thm-Bdim} holds.
\end{proof}

\begin{rem}\label{rem:5-2}
Let $\lambda'$ be a Lipschitz constant of $\gamma$, i.e., $\lambda'$ is a positive constant satisfying
\[
  |\gamma(x')-\gamma(x'')|\leq  \lambda' |x'-x''|, \quad x',x''\in I.
\]
From the proof of the above theorem, if we replace the constant $\lambda_S$ by $\lambda'$, then the theorem still holds.
\end{rem}

As we mentioned in Section~3, in the case that $\gamma$ is positive and constant on $I$, we have $\gamma(x)=\rho_S$ for all $x\in I$. Thus, from the above remark, we have the following result.

\begin{cor}\label{cor:4-12}
Assume that the vertical scaling function $S$ is positive on $I$, and $\gamma(x) \equiv \gamma_0$ for all $x \in I$. Then in the case that $\gamma_0>1$ and the interpolation points are not collinear, $\Bdim \Gamma f=1+\log \gamma_0/\log N$. Otherwise, $\Bdim \Gamma f=1$.
\end{cor}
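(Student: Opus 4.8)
The plan is to deduce Corollary~\ref{cor:4-12} directly from Theorem~\ref{thm:sufficient} together with Remark~\ref{rem:5-2}, reducing everything to two cases determined by $\gamma_0$. The key observation, already noted in the text just before the corollary, is that when $\gamma$ is positive and constant equal to $\gamma_0$, we have $\rho_S=\gamma_0$; so the threshold ``$\rho_S>1$'' in Theorem~\ref{thm-bdim} becomes the concrete, easy-to-check condition ``$\gamma_0>1$.'' The remaining work is to translate the hypothesis ``interpolation points are not collinear'' into the analytic condition $\lim_{p\to\infty}O_p(f,I)=\infty$ (equivalently, into the existence of a level $k_0$ with $O_{k_0}(f,I)$ exceeding the required threshold), and to handle the collinear case separately.

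First I would treat the case $\gamma_0>1$ with non-collinear interpolation points. Here I want to invoke Theorem~\ref{thm:sufficient}. Since $\gamma$ is constant, it is trivially Lipschitz with constant $\lambda'=0$, so by Remark~\ref{rem:5-2} I may replace $\lambda_S$ by $\lambda'=0$ in the hypothesis of that theorem. The required inequality $O_{k_0}(f,I)>\lambda' M_f/(\gamma_*-1)=0$ then reduces to simply finding some $k_0$ with $O_{k_0}(f,I)>0$, i.e.\ to showing that $f$ is non-constant. This is where non-collinearity enters: if the data points are not collinear, then $f$ cannot be the affine function through $(x_0,y_0)$ and $(x_N,y_N)$ (which under our normalization $y_0=y_N=0$ is the zero function), because $f$ interpolates all the $(x_i,y_i)$; hence $O(f,I)>0$ and already $O_1(f,I)>0$. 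With $\gamma_*=\gamma_0>1$ this verifies the hypotheses of Theorem~\ref{thm:sufficient}, which yields $\Bdim\Gamma f=1+\log\gamma_0/\log N$.

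Next I would handle the complementary case, i.e.\ $\gamma_0\le 1$ or the interpolation points are collinear, and show $\Bdim\Gamma f=1$. If $\gamma_0\le 1$, then $\rho_S=\gamma_0\le 1$, and Theorem~\ref{thm-bdim} gives $\Bdim\Gamma f=1$ directly. If instead the interpolation points are collinear, then under the normalization all $y_i$ lie on the line through $(0,0)$ and $(1,0)$, forcing $y_i=0$ for every $i$; the zero function then satisfies the recursion \eqref{eq:FIF-rec} (since $h\equiv 0$), so by uniqueness $f\equiv 0$, whence $O_p(f,I)=0$ for all $p$ and $\lim_{p\to\infty}O_p(f,I)=0<\infty$. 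Theorem~\ref{thm-bdim} again delivers $\Bdim\Gamma f=1$. In every sub-case the lower bound $\lowBdim\Gamma f\ge 1$ is automatic because $\Gamma f$ is the graph of a continuous function.

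The main obstacle, such as it is, lies in the non-collinear case: I must make precise the equivalence between non-collinearity of the data and non-triviality of $f$, and confirm that non-triviality suffices to launch the iteration in Theorem~\ref{thm:sufficient}. The delicate point is that Theorem~\ref{thm:sufficient} as literally stated requires $O_{k_0}(f,I)>\lambda_S M_f/(\gamma_*-1)$, and for a generic Lipschitz $S$ this threshold is strictly positive, so merely knowing $f$ is non-constant would not obviously be enough. It is precisely Remark~\ref{rem:5-2} that rescues the argument: because $\gamma$ is constant we may take the Lipschitz constant of $\gamma$ to be $0$, collapsing the threshold to $0$ and making non-constancy of $f$ exactly the needed condition. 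I would therefore be careful to cite Remark~\ref{rem:5-2} rather than Theorem~\ref{thm:sufficient} in its raw form, since the latter's constant $\lambda_S$ (the Lipschitz constant of $S$ itself) need not vanish even when $\gamma$ is constant.
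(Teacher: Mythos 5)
Your proposal is correct and takes essentially the same route as the paper, which derives the corollary in one line from Theorem~\ref{thm:sufficient} via Remark~\ref{rem:5-2} together with the identity $\rho_S=\gamma_0$ for positive constant $\gamma$; you simply make explicit the details the paper leaves implicit (non-collinearity $\Leftrightarrow$ $f\not\equiv 0$ $\Leftrightarrow$ $O_{k_0}(f,I)>0$, and the collinear and $\gamma_0\le 1$ cases via Theorem~\ref{thm-bdim}). The only cosmetic point is that Remark~\ref{rem:5-2} literally asks for a \emph{positive} Lipschitz constant $\lambda'$, so rather than setting $\lambda'=0$ you should take $\lambda'>0$ arbitrarily small and note that the fixed positive quantity $O_{k_0}(f,I)$ eventually exceeds $\lambda' M_f/(\gamma_*-1)$.
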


\begin{rem}\label{rem:4-13}
In \cite{BaMa15}, Barnsley and Massopust obtain the above result under a weaker condition: the condition ``$S$ is positive" was replaced by ``$S$ is nonnegative".

In \cite{KRZ18}, a so-called \emph{steady} condition was introduced in order to obtain the box dimension of bilinear fractal interpolation surfaces. In one dimensional case, we can define similarly. The vertical scaling function $S$ is called \emph{steady} on $I$ if for each $1\leq i\leq N$, either $S(x)\geq 0$ holds for all $x\in I_i$, or $S(x)\leq 0$ holds for $x\in I_i$. Assume that $\gamma(x)\equiv \gamma_0$ for all $x\in I$. By using the method in \cite{KRZ18}, if we replace the condition ``$S$ is positive on $I$" by ``$S$ is steady on $I$", the above corollary still holds.
\end{rem}








\section{An example and further remarks}

\subsection{An example}

\begin{exam}\label{exam-1}
Let $N=3$, $x_i=i/3$ for $i \in \{0,1,2,3\}$, $y_0=y_3=0$, and $y_1=y_2=1$.
Define
\begin{equation}\label{eq:exam-1}
	S(x)=\begin{cases}
	4/9,  &  		\quad	0			\leq x \leq \frac{1}{3},  \\
	x^2+1/3, &   		\quad	\frac{1}{3}	\leq x \leq \frac{2}{3}, \\
	13/9-x,  &  	\quad	\frac{2}{3}	\leq x \leq 1.	
	\end{cases}
\end{equation}
Let $f$ be the corresponding generalized affine FIF. See Figure~\ref{fig:FIF-Exam-1} for the graph of $f$.

Notice that for each $x\in I$, there exists a sequence $\{i_n\}_{n=1}^\infty$ with $i_n\in \{1,\ldots,N\}$ for each $n$, such that
$
  x\in \bigcap_{n=1}^\infty L_{i_1}\circ L_{i_2}\circ \cdots \circ L_{i_n}(I).
$
Thus, from \eqref{eq:FIF-rec},
\begin{align*}
  f(x)&=h(x)+S(x)f(L_{i_1}^{-1}(x)) \\
      &=h(x)+S(x)h(L_{i_1}^{-1}(x)) + S(x)S(L_{i_1}^{-1}(x))f\big(L_{i_2}^{-1}\circ L_{i_1}^{-1}(x)\big)\\
      &=h(x)+\sum_{n=1}^\infty \Big( S(x) S(L_{i_1}^{-1}(x))\cdots S\big(L_{i_{n-1}}^{-1} \circ \cdots \circ L_{i_1}^{-1}(x)\big) \Big) h\big( L_{i_n}^{-1}\circ \cdots L_{i_1}^{-1}(x)\big).
\end{align*}
It is easy to see that $|h(x)|\leq 1$ and $|S(x)|\leq 7/9$ for all $x\in I$. Thus
\[
  M_f=\max\{|f(x)|: x\in I\}\leq 1+\sum_{n=1}^\infty \Big(\frac{7}{9}\Big)^n=\frac{9}{2}.
\]

From \eqref{eq:exam-1},
\begin{equation}\label{eq:eg-lip}
	\gamma(x)=\sum_{i=1}^3 |S(L_i(x))|= \frac{x^2}{9}-\frac{x}{9} +\frac{5}{3}.
\end{equation}
Hence, $\gamma_*=59/36$, $\gamma^*=5/3$ and $\lambda'=1/9$ is a Lipschitz constant of $\gamma$. Thus $O_1(f,I)=2>\lambda' M_f/(\gamma_*-1)$. From Remark~\ref{rem:5-2}, \eqref{eq:thm-Bdim} holds.

By definition,

\begin{equation*}
M_1=
\left(
\begin{array}{ccc}
4/9	 	& 4/9		& 4/9		\\
43/81  	& 	52/81	& 7/9 		\\
7/9 	&2/3 		&  5/9
\end{array}
\right)
\quad \mbox{and} \quad
M'_1=
\left(
\begin{array}{ccc}
4/9	 	& 4/9		& 4/9		\\
4/9  	& 	43/81	& 52/81 	\\
2/3 	&5/9		&  4/9
\end{array}
\right).
\end{equation*}

In general, by calculation, we can obtain the spectral radii of vertical scaling matrices $\rho(M_k)$ and $\rho(M'_k)$, $1\leq k\leq 8$ as in Tabel~\ref{table:rho}.
Thus,
\[
  \dim_B \Gamma f = 1+\log \rho_S / \log N  \approx 1 + \log 1.647 /\log 3 \approx 1.454.
\]

\begin{table}[htbp]
	\centering
	\begin{tabular}{|c|c|c|c|c|c|c|c|c|}
	\hline 												
	$k$ &  1	&  2 	&  3 & 4 & 5 & 6 & 7 & 8  \\	\hline
	$\rho(M_k)$ 	&	1.7622	&  1.6852		& 1.6599	& 1.6515 & 1.6488 & 1.6478 &  1.6475 &  1.6474	\\	\hline
	$\rho(M'_k)$ 	&	1.5380	&	1.6102	&	1.6349 & 1.6432 & 1.6460 &  1.6469 & 1.6472 & 1.6473	\\	\hline
	\end{tabular}
	\caption{$\rho(M_k)$ and $\rho(M_k')$ in Example~\ref{exam-1}}
	\label{table:rho}
\end{table}

\end{exam}

\begin{figure}[htbp]
    \centering
    \includegraphics[width=5cm]{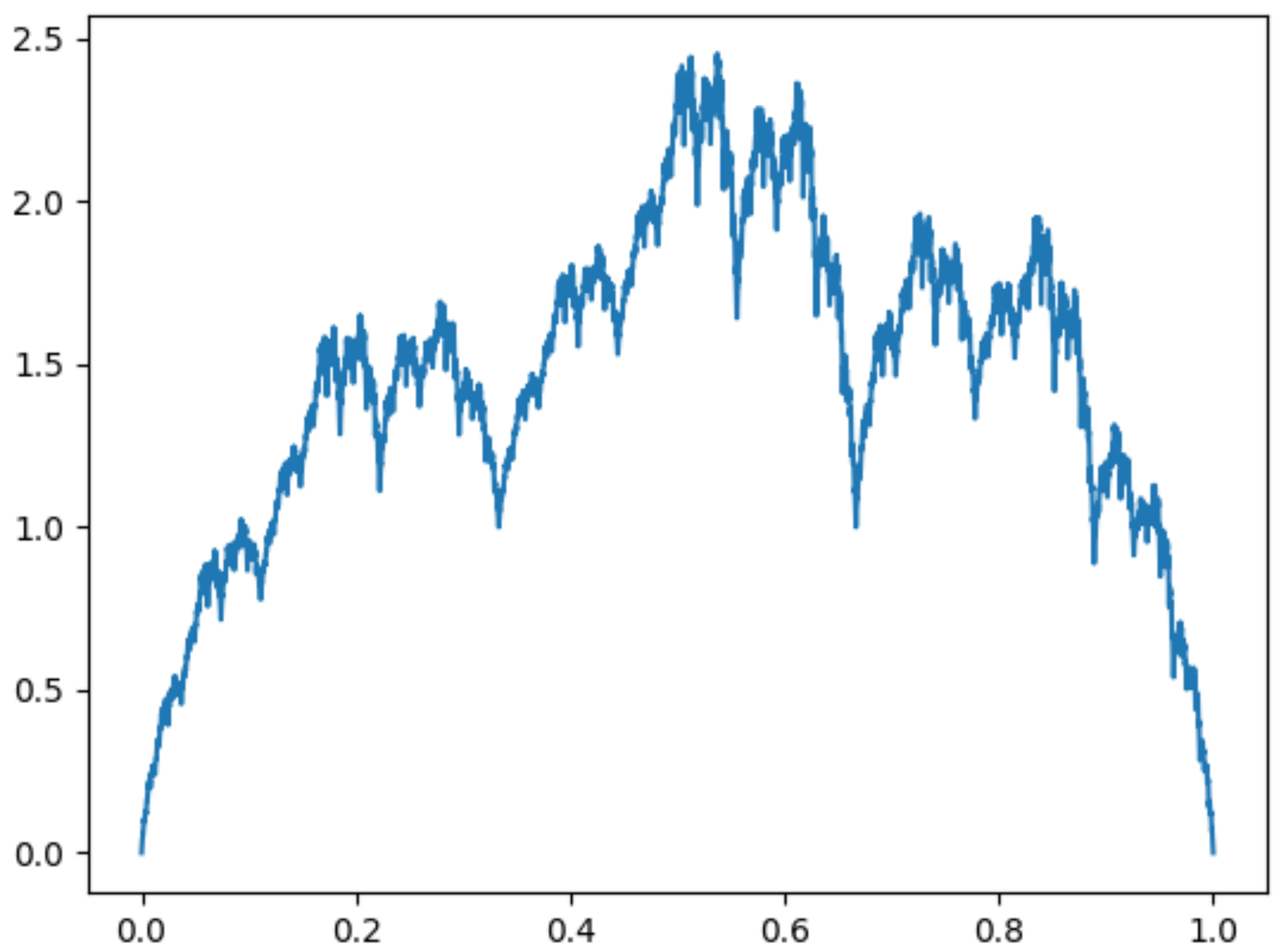}
    \caption{The FIF in Example~\ref{exam-1}}
    \label{fig:FIF-Exam-1}
\end{figure}

\begin{rem}
  Assume that the vertical scaling function $S$ is positive on $I$.
  Notice that 
  \begin{align*}	
	\rho(M'_k)\leq \rho(M_k) \leq \max_{1\leq j\leq N^k}\sum_{i=1}^{N^k} (M_k)_{i,j}<N.
  \end{align*}
  Hence, from the proof of Proposition~\ref{prop:rho-rho},
  \[
    \rho(M'_k)\leq \rho(M_k)\leq (1+C\lambda_S N^{-k-1}) \rho(M'_k)<\rho(M'_k)+C\lambda_S N^{-k},
  \]
  where $C=(\min\{S(x):\, x\in I\})^{-1}$. Thus
  \[
    0\leq \rho(M_k)-\rho(M'_k)<C\lambda_S N^{-k}.
  \]
  If for any $k\geq 1$, $1\leq i\leq N$ and $1\leq j\leq N^k$, we arbitrarily pick $x_{i,j}^k\in I_{i,j}^k$ and define an $N^k\times N^k$ matrix $T_k$ by replacing $\overline{s}_{i,j}^k$ with $S(x_{i,j}^k)$ in \eqref{eq:Mk-def}. Then
  \[
    |\rho_S-\rho(T_k)| \leq \rho(M_k) - \rho(M'_k)<C\lambda_S N^{-k}.
  \]
  
  For example, we can define $x_{i,j}^k$ to be the left endpoint (or right endpoint) of $I_{i,j}^k$. This gives us an effective method to estimate $\rho_S$.
\end{rem}

\subsection{Further remarks}


Recently, there are many deep works on Hausdorff dimension of self-affine sets and classical fractal functions, see \cite{BHR19,FalKem16,RenShen21} and the references therein. In particular,
 B\'{a}r\'{a}ny, Rams and Simon \cite{BRS20} proved that under certain conditions, the Hausdorff dimension and the box dimension of recurrent FIFs coincide. Thus, it is quite natural to see whether methods in these papers are applicable to our setting.


In the end of this paper, we pose some questions and conjectures related with our work.

\begin{conj}
Assume that the the vertical scaling function $S$ is not identically zero on every subinterval of $I=[0,1]$. Then
$\lim_{k\to \infty}\rho(M_k)=\lim_{k\to \infty}\rho(M_k')$.
\end{conj}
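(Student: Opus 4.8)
The plan is to keep both sequences, reduce the conjecture to the statement that their difference vanishes, and then attack that difference by perturbation theory for the Perron root of the primitive matrix $M_k$.

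First I would record that $\lim_{k\to\infty}\rho(M'_k)$ exists already under the weaker hypothesis. Fix $\epsilon>0$ and replace $|S|$ by the positive Lipschitz function $|S|+\epsilon$; the associated matrices $M_k(\epsilon)$ and $M'_k(\epsilon)$ are obtained from $M_k,M'_k$ by adding $\epsilon$ to every structural entry, and $|S|+\epsilon$ satisfies the hypotheses of Theorem~\ref{th:rho-m}, so $\rho(M'_k(\epsilon))$ is nondecreasing in $k$. Since the spectral radius is continuous in the entries and $M'_k(\epsilon)\to M'_k$ as $\epsilon\to0$, letting $\epsilon\to0$ preserves the monotonicity, so $\rho(M'_k)$ is nondecreasing; being bounded above by $\rho(M_1)$ it converges to some $\rho_*$. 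As $M'_k\le M_k$ entrywise gives $\rho(M'_k)\le\rho(M_k)$, we have $\rho_*\le\rho^*$, and since both sequences converge the conjecture is equivalent to $\rho(M_k)-\rho(M'_k)\to0$.

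For the gap I would use the Collatz--Wielandt inequality together with the Perron eigenvector. By Lemma~\ref{lem:primitive} the matrix $M_k$ is primitive, so it has a strictly positive right eigenvector $v^{(k)}$ with $M_kv^{(k)}=\rho(M_k)v^{(k)}$; normalize so that $\max_i v_i^{(k)}=1$. Writing $E_k=M_k-M'_k\ge0$, whose structural entries are $\overline{s}^k_{i,j}-\underline{s}^k_{i,j}\le\lambda_S N^{-k-1}$, the Collatz--Wielandt characterization applied to $M'_k$ yields
\[
  \rho(M'_k)\ \ge\ \min_i\frac{(M'_kv^{(k)})_i}{v^{(k)}_i}\ =\ \rho(M_k)-\max_i\frac{(E_kv^{(k)})_i}{v^{(k)}_i}.
\]
Since each row of $E_k$ has $N$ structural entries, $(E_kv^{(k)})_i\le\lambda_S N^{-k}$, and hence the gap is controlled by $\lambda_S N^{-k}/\min_i v^{(k)}_i$. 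Thus it would suffice to show $\min_i v^{(k)}_i\gg N^{-k}$.

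The lower bound on $\min_i v^{(k)}_i$ is the main obstacle, and it is exactly where the vanishing of $S$ enters. When $\min_I S>0$ one recovers $\min_i v^{(k)}_i$ comparable to $N^{-k}$ and the argument closes, reproving Proposition~\ref{prop:rho-rho}. In general primitivity gives $v^{(k)}_i\ge\min_{i,j}((M_k)^k)_{ij}/\rho(M_k)^k$, but the connecting products furnished by Lemma~\ref{lem:primitive} involve $k$ factors $\overline{s}^k_{i,j}$ that may be as small as the distance from $I^k_{i,j}$ to the zero set of $S$, so this only gives $\min_i v^{(k)}_i\gtrsim(\min_{i,j}\overline{s}^k_{i,j})^k$, far smaller than $N^{-k}$. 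In other words, the potential $\log|S|$ is unbounded below near the zeros of $S$, and the Perron eigenvector can decay super-exponentially there. To get around this I would pass to a thermodynamic-formalism description: interpret $\rho(M_k)$ and $\rho(M'_k)$ as leading eigenvalues of transfer operators for the potential $\log|S|$ over the subshift encoded by the sparsity pattern, with $\log\overline{s}^k$ and $\log\underline{s}^k$ the upper and lower step-function approximations of $\log|S|$ on the level-$k$ partition. The conjecture then becomes the assertion that these approximations yield the same pressure in the refinement limit, which should follow once one shows that the equilibrium state charges a neighborhood of the zero set of $S$ negligibly and that $\int\log|S|\,d\mu_{\mathrm{eq}}>-\infty$; establishing this integrability and the accompanying uniform control near the zero set is, I expect, the crux of the whole problem.
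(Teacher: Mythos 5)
This statement is posed in the paper as an open conjecture; the paper contains no proof of it, and only establishes the conclusion under the stronger hypothesis (A6) that $S$ is positive (Proposition~\ref{prop:rho-rho}). Your proposal, as you yourself concede in its last sentence, is not a proof either: it reduces the conjecture to a quantitative lower bound on the minimal entry of the Perron eigenvector $v^{(k)}$ of $M_k$ (namely $\min_i v^{(k)}_i\gg N^{-k}$), observes that primitivity alone cannot deliver this near the zero set of $S$, and then defers the crux to an unproved integrability/pressure statement. The parts that are solid and genuinely useful are the preliminary ones: the $\ep$-perturbation argument showing that $\rho(M'_k)$ is still nondecreasing in $k$ under (A6') alone (a small extension of Theorem~\ref{th:rho-m}, whose proof as written needs positivity to invoke Perron--Frobenius on $\widetilde{M}'_k$), and the correct reduction of the conjecture to $\rho(M_k)-\rho(M'_k)\to 0$ via $M'_k\le M_k$.

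Beyond the admitted gap, one intermediate claim is also shaky: you assert that when $\min_I S>0$ the additive Collatz--Wielandt bound closes because $\min_i v^{(k)}_i$ is comparable to $N^{-k}$. The connectivity argument of Lemma~\ref{lem:primitive} only yields $\rho(M_k)^k v^{(k)}_i\ge \big((M_k)^k\big)_{i,i^*}\ge (\min_{i,j}\overline{s}^k_{i,j})^k$ for the index $i^*$ with $v^{(k)}_{i^*}=1$, i.e.\ $\min_i v^{(k)}_i\gtrsim (\delta/\rho(M_k))^k$ with $\delta=\min_I S$; since $\rho(M_k)$ can be close to $N$, this is in general far smaller than $N^{-k}$, and the resulting gap estimate $\lambda_S N^{-k}/\min_i v^{(k)}_i$ need not tend to $0$. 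The paper's proof of Proposition~\ref{prop:rho-rho} avoids this entirely by the \emph{multiplicative} comparison $M'_k\le M_k\le (1+C\lambda_S N^{-k-1})M'_k$, which requires no eigenvector lower bound; that comparison is exactly what breaks down when $\underline{s}^k_{i,j}$ can vanish, and is why the general case remains open.
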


\begin{conj}
Theorem~\ref{thm-bdim} holds if we replace the condition (A6) by (A6').
\end{conj}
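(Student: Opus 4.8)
The final statement is Conjecture~2, which asserts that Theorem~\ref{thm-bdim} remains valid when the positivity hypothesis (A6) is weakened to (A6'), i.e.\ when $S$ is merely required to be not identically zero on any subinterval of $I$, rather than strictly positive.

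\textbf{Overall strategy.} The plan is to re-examine exactly where the positivity of $S$ was used in the chain of results leading to Theorem~\ref{thm-bdim}, and to show each such use can be replaced by an argument valid under (A6'). The upper-bound half (Theorem~\ref{th:upper}) already assumes only (A6'), so the entire burden falls on the lower-bound half (Theorem~\ref{th:lower}) together with the identity $\rho_*=\rho^*$ from Proposition~\ref{prop:rho-rho}. Positivity enters in three distinct places: first, in Theorem~\ref{th:rho-m} and Proposition~\ref{prop:rho-rho}, where $\underline{s}^k_{i,j}>0$ is needed both to make $M'_k$ primitive and to bound $\overline{s}^k_{i,j}/\underline{s}^k_{i,j}$; second, in Lemma~\ref{lem:limit_o}, where $(M'_k)^k>0$ is invoked to get a strictly positive minimal entry $\alpha_k$; and third, in the passage to the limit in Theorem~\ref{th:lower}. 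So the first step is to isolate these three dependencies and treat them one at a time.

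\textbf{Key steps.} First I would address the spectral-radius convergence. Under (A6') the matrix $M'_k$ may have zero entries, so $\rho_*$ need not equal $\rho^*$ a priori --- indeed this is precisely the content of the (still open) Conjecture~1. The natural workaround is to avoid $M'_k$ entirely on the lower-bound side and instead work with the upper matrices $M_k$, exploiting that the desired conclusion only involves $\rho^*=\lim_k \rho(M_k)$. Concretely, I would attempt to reprove a version of Lemma~\ref{lem:limit_o} and Theorem~\ref{th:lower} in which the recursion \eqref{eq:lower-connect} uses a matrix whose spectral radius converges to $\rho^*$. The obstacle is that the honest lower bound on oscillations in Corollary~\ref{cor:2} carries the coefficient $\underline{s}^k_{i,j}$, not $\overline{s}^k_{i,j}$, so one cannot simply substitute $M_k$ for $M'_k$. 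One possible remedy is to refine the partition: for each fixed tolerance one chooses $k$ large enough that, on each relevant subinterval, $\overline{s}^k_{i,j}-\underline{s}^k_{i,j}\le \lambda_S N^{-k-1}$ is small relative to the local value of $S$ \emph{wherever $S$ is bounded below}, and to handle the subintervals where $S$ is near zero separately by a compactness or covering argument. Second, I would re-establish primitivity of $M'_k$: Lemma~\ref{lem:primitive} already shows $(M_k)^k>0$ under (A6') alone, and the same combinatorial path $t_1,\dots,t_{k+1}$ shows $(M'_k)^k>0$ provided every $\underline{s}^k_{i,j}>0$, which fails under (A6'); so the minimal entry $\alpha_k$ in Lemma~\ref{lem:limit_o} could vanish. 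I would try to replace $\alpha_k>0$ by the weaker fact that $M'_k$ is irreducible (which needs only that along \emph{some} admissible path the entries are positive, plausibly still true under (A6')), and then substitute the Perron eigenvector in place of the all-ones comparison used via $w_k$.

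\textbf{Main obstacle.} The hardest step will be the lower-bound argument under vanishing of $\underline{s}^k_{i,j}$. The current proof of Theorem~\ref{th:lower} crucially uses $\rho(M'_k)>0$ with a strictly positive eigenvector and the strict positivity $(M'_k)^k>0$ to propagate oscillation lower bounds through all coordinates; if $S$ vanishes on a subinterval, oscillation mass can fail to spread to the affected cells, and the clean geometric growth $V(f,k,q+n)\ge \tau^n v$ can break down. In fact the gap between $\rho_*$ and $\rho^*$ is genuine here: without positivity one only controls $\liminf$ through $\rho(M'_k)$, whose limit $\rho_*$ may be strictly smaller than $\rho^*$, so the equality $\rho_*=\rho^*$ underpinning Theorem~\ref{thm-bdim} is exactly what one cannot assume. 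Thus I expect that proving Conjecture~2 in full generality will require first resolving Conjecture~1 (the coincidence of the two limits), or else finding a direct lower bound on $O_k(f,I)$ that does not route through $M'_k$ at all --- perhaps by locating, for each large $k$, a single subinterval on which $S$ is bounded away from zero and on which the oscillation recursion alone forces growth at rate arbitrarily close to $\rho^*$. I would therefore present the reduction of Conjecture~2 to Conjecture~1 as the principal partial result, and flag the direct lower-bound approach as the remaining difficulty.
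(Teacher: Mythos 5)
The statement you are addressing is not a theorem of the paper but its second conjecture: the authors pose it as open and give no proof, so there is no argument of theirs to compare against, and your text is not a proof either --- it is a (largely accurate) map of the obstacles. You correctly identify that Theorem~\ref{th:upper} already works under (A6') and that the entire difficulty sits on the lower-bound side: Theorem~\ref{th:rho-m}, Proposition~\ref{prop:rho-rho}, Lemma~\ref{lem:limit_o} and Theorem~\ref{th:lower} all use positivity of $S$, through $\underline{s}^k_{i,j}>0$, through $(M'_k)^k>0$, and through the strictly positive Perron eigenvector of $M'_k$. That diagnosis matches the structure of the paper exactly. But none of your proposed remedies is carried out. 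You do not prove irreducibility of $M'_k$ under (A6'), and it can genuinely fail: (A6') permits $S$ to vanish on a nowhere dense closed set (e.g.\ a Cantor set), in which case $\underline{s}^k_{i,j}=0$ for many cells, whole rows of $M'_k$ can vanish, the Perron eigenvector need not be positive, and the comparison $v\geq \frac{1}{\rho(M'_k)-\tau}\,u$ at the start of the proof of Theorem~\ref{th:lower} has no substitute. The partition-refinement idea for smuggling $M_k$ into the lower recursion runs against the direction of the inequality in Corollary~\ref{cor:2}, as you yourself note, and the ``compactness or covering argument'' for cells where $S$ is near zero is never specified.

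Moreover, even the one concrete deliverable you claim --- the reduction of this conjecture to the paper's first conjecture (the equality $\lim_{k}\rho(M_k)=\lim_{k}\rho(M'_k)$ under (A6')) --- is not actually established by what you wrote. Knowing $\rho_*=\rho^*$ would repair Proposition~\ref{prop:rho-rho}, but the proofs of Lemma~\ref{lem:limit_o} and Theorem~\ref{th:lower} additionally require $\alpha_k>0$ (i.e.\ $(M'_k)^k>0$) and a positive eigenvector dominating the error vector $u$; these are independent uses of (A6) that survive even if the two spectral limits coincide, so the reduction itself still has a hole. As it stands you have written a sensible research program with the main obstacle correctly located, but no step that would have to be new is executed --- every one is flagged rather than proved. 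That is an appropriate posture toward an open conjecture, but it should be labelled as a program, not offered as a proof.
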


\begin{ques}
Can we obtain the box dimension of generalized affine FIFs under the conditions (A1)-(A3) and (A5)?
\end{ques}

\begin{center}
{\noindent\bf Acknowledgments}
\end{center}
\medskip

The authors wish to thank Dr. Jian-Ci Xiao for helpful discussions.

\bibliographystyle{amsplain}

\end{document}